\documentclass{amsart}
\usepackage{amssymb,amsmath,amsrefs}
\usepackage[colorlinks=true]{hyperref}
\hypersetup{urlcolor=blue, citecolor=green}
\usepackage[dvips]{graphicx}
\usepackage{color}
\usepackage{subcaption}
\usepackage{float}

\graphicspath{{Imagens/}}

\bibliographystyle{amsplain}

\theoremstyle{plain}

\newtheorem*{conj*}{Conjecture}
\newtheorem*{cor*}{Corollary}
\newtheorem{theorem}{Theorem}[section]

\newtheorem{proposition}[theorem]{Proposition}
\newtheorem{corollary}[theorem]{Corollary}
\newtheorem{lemma}[theorem]{Lemma}

\newtheorem{question}{Question}

\theoremstyle{definition}
\newtheorem*{def*}{Definition}

\newtheorem{definition}[theorem]{Definition}

\newcommand{\SE}{{\mathcal E}}

\newcommand{\N}{\mathbb{N}}

\newcommand{\eps}{\varepsilon}

\newcommand{\mc}[1]{\mathcal{{#1}}}

\newcommand{\tn}{\textnormal}


\newcommand{\dist}{\operatorname{\textit{d}}}

\newcommand{\inte}{\operatorname{Int}}

\newcommand{\espinha}{\operatorname{Spin}}

\newcommand{\diam}{\operatorname{diam}}

\title{Continuum-wise hyperbolic homeomorphisms on surfaces}
\author{Rodrigo Arruda}
\author{Bernardo Carvalho}
\author{Alberto Sarmiento}
\date{\today}
\thanks{2020 \emph{Mathematics Subject Classification}: Primary 37B45; Secondary 37D10.}
\keywords{cw-hyperbolicity, classification, spines}

\begin{document}

\begin{abstract}
This paper discusses the dynamics of continuum-wise hyperbolic surface homeomorphisms. We prove that $cw_F$-hyperbolic surface homeomorphisms containing only a finite set of spines are $cw_2$-hyperbolic. In the case of $cw_3$-hyperbolic homeomorphisms we prove the finiteness of spines and, hence, that $cw_3$-hyperbolicity implies $cw_2$-hyperbolicity. In the proof, we adapt techniques of Hiraide \cite{Hiraide1987ExpansiveHO} and Lewowicz \cite{Lewowicz1989ExpansiveHO} in the case of expansive surface homeomorphisms to prove that local stable/unstable continua of $cw_F$-hyperbolic homeomorphisms are continuous arcs. We also adapt techniques of Artigue, Pac\'ifico and Vieitez \cite{Artigue2013NexpansiveHO} in the case of N-expansive surface homeomorphisms to prove that the existence of spines is strongly related to the existence of bi-asymptotic sectors and conclude that spines are necessarily isolated from other spines.

\end{abstract}

\maketitle

\section{Introduction and statement of results}

In the classification of expansive homeomorphisms on surfaces, Lewowicz and Hiraide \cites{Lewowicz1989ExpansiveHO,Hiraide1987ExpansiveHO} prove that expansiveness implies the homeomorphism is pseudo-Anosov. The hypothesis in the dynamics is only expansiveness, the space is a surface, and from these a complete understanding of the structure of local stable/unstable sets is obtained: they are subsets of $C^0$ transversal singular foliations with a finite number of singularities.
A classification of topologically hyperbolic homeomorphisms on surfaces was also obtained \cite{hiraide2toro}. Expansive surface homeomorphisms satisfying the shadowing property are conjugate to an Anosov diffeomorphism of the Torus $\mathbb{T}^2$. Expansiveness and shadowing together define a local product structure that is continuous and rule out the presence of singularities on local stable/unstable sets.

With the study of dynamics beyond topological hyperbolicity (see \cites{antunes2023firsttime,Artigue2019CountablyAE,ARTIGUE20203057,artigue2020continuumwise,CARVALHO20163734,carvalho2019positively}), which is being developed as the understanding of generalizations of hyperbolicity from a topological perspective, we could try to extend the reach of these techniques to distinct and more general scenarios. This idea was first explored by Artigue, Pac\'ifico and Vieitez \cite{Artigue2013NexpansiveHO} in the study of N-expansive homeomorphisms on surfaces. They exhibit an example of a 2-expansive homeomorphism on the bitorus that is not expansive, and prove that non-wandering 2-expansive homeomorphisms on surfaces are, indeed, expansive. The techniques of Lewowicz/Hiraide are applied to study the structure of bi-assymptotic sectors, which are discs bounded by a local stable and a local unstable arc. Bi-asymptotic sectors naturally appear in the dynamics of 2-expansive homeomorphisms, and it is proved in \cite{Artigue2013NexpansiveHO} that these sectors should be contained in the wandering part of the system. The techniques seem to be restricted to the case of 2-expansive homeomorphisms and extend them to 3-expansiveness seems complicated. In particular, use them to understand the dynamics of more general continuum-wise expansive homeomorphisms, introduced by Kato in \cite{kato_1993}, seems difficult. 

However, in the study of cw-expansive homeomorphisms a recent work of Artigue, Carvalho, Cordeiro and Vieitez \cite{artigue2020continuumwise} discussed the continuum-wise hyperbolicity, assuming that local stable and unstable continua of sufficiently close points of the space intersect (see Definition \ref{cwf}). Cw-hyperbolic systems share several important properties with the topologically hyperbolic ones, such as the L-shadowing property \cite{ARTIGUE20203057} and a spectral decomposition theorem \cite{artigue2020continuumwise}, but a few important differences are noted on the pseudo-Anosov diffeomorphism of $\mathbb{S}^2$: the existence of stable/unstable spines, bi-asymptotic sectors, cantor sets in arbitrarily small dynamical balls, and a cantor set of distinct arcs in local stable/unstable sets.

In this paper we start to adapt the techniques of Lewowicz/Hiraide to the study of cw-hyperbolic homeomorphisms on surfaces. One important hypothesis in our results is cw$_{F}$-expansiveness, that asks for a finite number of intersections between any pair of local stable and local unstable continua (see Definition \ref{cwf}). We do not know an example of a cw-hyperbolic surface homeomorphism that is not cw$_{F}$-expansive, so the study of cw$_{F}$-hyperbolicity on surfaces seems to be the perfect first step in the theory. In our first result, we prove that local stable/unstable continua of cw$_{F}$-hyperbolic surface homeomorphisms are arcs (see Proposition \ref{arcs}). Cw$_{F}$-hyperbolicity is important in this step since in \cite{artigueanomalous} there is a cw-expansive surface homeomorphism with non locally connected local stable continua. But even in the case they are locally connected, they are only assured to be contained in dendritations as proved in \cite{artigue_2018}. Section 2 is devoted to prove that they are arcs and this is done in a few important steps that are based in the ideas of Lewowicz and Hiraide.

In our second result we relate bi-asymptotic sectors and spines in a way that every regular sector contains a single spine and every spine is contained in a regular bi-asymptotic sector. The notion of regular bi-asymptotic sector is defined and pictures of non-regular sectors are presented. We give a complete description of the structure of local stable and local unstable continua inside a regular bi-asymptotic sector. We also prove that every bi-asymptotic sector contains a regular bi-asymptotic sector and, hence, a spine, and that non-regular sectors contain at least two distinct spines. All these results on sectors and spines are proved in Section 3 and allow us to conclude that the spines of a cw$_{F}$-hyperbolic surface homeomorphism are isolated from other spines and, hence, we obtain that there is at most a countable number of them.

In Section 4 we prove our main result using all the techniques developed in the previous sections. The hypothesis of the existence of at most a finite number of spines is important and will be discussed. We do not know examples of cw-hyperbolic surface homeomorphisms with an infinite number of spines, so this hypothesis also seems reasonable. The following is the main result of this article:

\begin{theorem}\label{main}
If a cw$_{F}$-hyperbolic surface homeomorphism has only a finite number of spines, then it is cw2-hyperbolic. Cw3-hyperbolic surface homeomorphisms have at most a finite number of spines, and are cw2-hyperbolic.
\end{theorem}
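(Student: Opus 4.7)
The plan is to prove both halves together via the dictionary, developed in Sections 2 and 3, between supernumerary intersections of stable and unstable continua, bi-asymptotic sectors and spines. The guiding principle is: local stable/unstable continua are arcs (Proposition \ref{arcs}), so if such a pair $(C^s,C^u)$ meets in $k$ points it bounds at least $k-1$ pairwise-disjoint bi-asymptotic discs, each of which contains a regular bi-asymptotic sector and hence a spine. Excess intersections must be paid for by spines, and each spine is uniquely located inside its regular sector (Section 3).

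For the first assertion, suppose $f$ is cw$_F$-hyperbolic with only the spines $p_1,\dots,p_N$, and that some pair $(C^s(x),C^u(y))$ meets in $k\ge 3$ points. If as the base points $(x,y)$ vary the intersection count is unbounded, then some pair yields more than $N$ disjoint bi-asymptotic discs, forcing more than $N$ distinct spines, a direct contradiction. If instead the count is uniformly bounded by some $K\ge 3$, I would use continuity of local continua to sweep $(x',y')$ through an open neighborhood of $(x,y)$; each perturbed pair again bounds at least two disjoint sectors, whose interior spines vary continuously. Since spines are isolated, a continuous family of disjoint sectors cannot all host the same spine; combined with the finiteness hypothesis, and with the fact from Section 3 that a non-regular sector appearing on the boundary of such a family contains at least two spines, we reach a contradiction.

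For the second assertion, I would combine the isolation of spines with cw$_3$. If the spine set were infinite, compactness of $M$ would force an accumulation point $q$; around a sequence $q_n\to q$ of spines sit regular bi-asymptotic sectors $S_n$ bounded by shrinking local stable arcs $\alpha^s_n$ and local unstable arcs $\alpha^u_n$. These boundaries accumulate on local stable and local unstable structure through $q$, so for $n$ large enough a single local stable continuum and a single local unstable continuum through $q$ can be made to meet in at least four points, two coming from each of two nearby sectors $S_n$ and $S_m$. This violates cw$_3$-hyperbolicity. Hence the spine set is finite, and the first assertion then delivers cw$_2$-hyperbolicity.

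The main obstacle will be converting the informal ``accumulation of sectors'' into concrete intersection counts on a single pair of continua: in Part 1, the continuity step that prevents a single spine from being shared by a whole open family of disjoint regular sectors; in Part 2, the extraction, from a sequence of shrinking regular sectors near $q$, of a single ambient pair crossing at least four times. Both steps rely essentially on the fine description of regular versus non-regular sectors from Section 3 and on the arc structure of local continua from Section 2, adapting the Lewowicz-Hiraide counting arguments used in the $N$-expansive setting of Artigue-Pac\'ifico-Vieitez.
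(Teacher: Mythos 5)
Your overall strategy — convert supernumerary intersections into spines via bi-asymptotic sectors and use isolation/finiteness to derive a contradiction — is the same dictionary the paper uses, but both halves of the argument have genuine gaps that the paper closes with tools you don't invoke.

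\textbf{Part 1.} Your opening premise, ``if such a pair $(C^s,C^u)$ meets in $k$ points it bounds at least $k-1$ pairwise-disjoint bi-asymptotic discs,'' is false. The paper's Figures \ref{fig:duasespinhas4} and \ref{fig:setorbugado22} show that when an intersection is non-regular, the ``sector'' from one pair of consecutive intersections can be nested inside the next rather than disjoint from it. Handling exactly this situation is the content of Proposition \ref{notregular}, which shows a non-regular sector still contains two distinct spines; you never invoke it or reproduce its argument. Your subsequent case split (uniformly bounded vs.\ unbounded intersection count) and the ``sweep a neighbourhood and use continuity of spines'' step are also off: spines are isolated points and there is no continuous spine-valued map to exploit. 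The paper's actual argument is simpler and avoids both issues. Failure of cw$_2$ means that for every $\alpha>0$ there is a pair with three intersections; taking three \emph{consecutive} ones $a_1<a_2<a_3$ (so the regions between them are genuine discs), one either gets two disjoint regular sectors, each with its own spine by Proposition \ref{espinhasetor}, or a non-regular sector, which contains two spines by Proposition \ref{notregular}. Either way one gets two distinct spines within distance $\alpha$ of each other, for every $\alpha$. Since a finite set of spines has a positive minimum separation, this is the contradiction — no case split or continuity is needed.

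\textbf{Part 2.} Your accumulation-at-$q$ idea is morally the right starting point, but you yourself flag the obstacle and the obstacle is fatal as stated: the sectors produced by Lemma \ref{spinesector} around a spine $q_n$ have no uniform lower bound on their diameter, so as $q_n\to q$ the sectors $S_n$ can shrink to points and the four would-be intersection points of ``a stable and an unstable continuum through $q$'' can collapse together or fail to be intersections of a single pair at all. The paper's fix is Lemma \ref{setorlongo}, which produces bi-asymptotic sectors of \emph{uniform} diameter hugging $C^s_\eps(x)$ for any spine $x$ (see Figure \ref{fig:setorlongo}). The proof of Part 2 then picks two spines $x_1,x_2$ close enough that $C^s_\alpha(x_1)\cap C^u_\alpha(x_2)$ contains a point $y$ near both (cw-local-product-structure), and applies Lemma \ref{setorlongo} to get a long sector near $C^s_\eps(x_1)$ and another near $C^u_\eps(x_2)$; these two long sectors cross at four distinct points (Figure \ref{fig:cw3finito4}), giving $\#(C^s_c\cap C^u_c)\ge 4$ at arbitrarily small scale and contradicting cw$_3$. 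Without the long-sector lemma, your sketch cannot be completed.

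=== END OF REVIEW ===
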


We note that in \cite{artigue2020continuumwise} it is proved that the product of $n$ copies of the pseudo-Anosov diffeomorphism of $\mathbb{S}^2$ is cw$_{2^n}$-hyperbolic but is not cw$_{2^n-1}$-expansive. Thus, the hypothesis on the space being a surface is important for our main result. We state the following questions that follow naturally from our results:

\begin{question}
        Does there exist a $cw$-hyperbolic surface homeomorphism that is not $cw_2$-hyperbolic?
    \end{question}

    \begin{question}
        Does $cw_F$-hyperbolicity on surfaces imply finiteness on the number of spines and, hence, $cw_2$-hyperbolicity? 
    \end{question}

    \begin{question}
        Does $cw$-hyperbolicity on surfaces imply local connectedness of $C^s_\eps(x)$?
    \end{question}

\begin{question}
        Can we adapt the techniques of this paper to prove that 3-expansive surface homeomorphisms are 2-expansive?
    \end{question}
    
    \vspace{+0.4cm}

\section{Local stable/unstable continua are arcs}
    
We begin this section with some precise definitions. Let $(X,d)$ be a compact metric space and $f\colon X\to X$ be a homeomorphism.
We consider the \emph{c-stable set} of $x\in X$ as the set 
$$W^s_{c}(x):=\{y\in X; \,\, d(f^k(y),f^k(x))\leq c \,\,\,\, \textrm{for every} \,\,\,\, k\geq 0\}$$
and the \emph{c-unstable set} of $x$ as the set 
$$W^u_{c}(x):=\{y\in X; \,\, d(f^k(y),f^k(x))\leq c \,\,\,\, \textrm{for every} \,\,\,\, k\leq 0\}.$$
We consider the \emph{stable set} of $x\in X$ as the set 
$$W^s(x):=\{y\in X; \,\, d(f^k(y),f^k(x))\to0 \,\,\,\, \textrm{when} \,\,\,\, k\to\infty\}$$
and the \emph{unstable set} of $x$ as the set 
$$W^u(x):=\{y\in X; \,\, d(f^k(y),f^k(x))\to0 \,\,\,\, \textrm{when} \,\,\,\, k\to-\infty\}.$$
We denote by $C^s_c(x)$ the $c$-stable continuum of $x$, that is the connected component of $x$ on $W^s_{c}(x)$, and denote by $C^u_c(x)$ the $c$-unstable continuum of $x$, that is the connected component of $x$ on $W^u_{c}(x)$. 
    
    \begin{definition}\label{cwf}
        We say that $f$ is $cw$-expansive if there exists $c>0$ such that $$W^s_c(x)\cap W^u_c(x) \quad \text{ is totally disconnected }$$ for every $x\in X$.
        A $cw$-expansive homeomorphism is said to be $cw_F$-expansive if there exists $c>0$ such that $$\#(C^s_c(x)\cap C^u_c(x))<\infty \quad \text{ for every } \quad x\in X.$$
        Analogously, $f$ is said to be $cw_N$-expansive if there is $c>0$ such that $$\#(C^s_c(x)\cap C^u_c(x))\leq N \quad \text{ for every } \quad x\in X.$$    
        We say that $f$ satisfies the $cw$-local-product-structure if for each $\eps>0$ there exists $\delta>0$ such that $$C^s_\eps(x)\cap C^u_\eps(y)\neq \emptyset \quad \text{ whenever } \quad \dist(x,y)< \delta.$$ The $cw$-expansive homeomorphisms (resp.\ $cw_F$, $cw_N$) satisfying the $cw$-local-product-structure are called $cw$-hyperbolic (resp.\ $cw_F$, $cw_N$).
    \end{definition}

The main examples of cw-hyperbolic surface homeomorphism are the Anosov diffeomorphisms (or more generally the topologically hyperbolic homeomorphisms), and the pseudo-Anosov diffeomorphism of $\mathbb{S}^2$. The sphere $\mathbb{S}^2$ can be seen as the quotient of $\mathbb{T}^2$ by the antipodal map, and thus any 2x2 hyperbolic matrix $A$ with integer coefficients and determinant one induces diffeomorphisms $f_A$ on $\mathbb{T}^2$ and $g_A$ on $\mathbb{S}^2$. The diffeomorphism $f_A$ is Anosov and, hence, cw1-hyperbolic, while $g_A$ is cw2-hyperbolic but not cw1-expansive (see \cites{Artigue2019CountablyAE,artigue2020continuumwise,ARTIGUE20203057} for more details). 
We recall some known properties of local stable/unstable continua for cw$_F$-hyperbolic homeomorphisms. Most of them hold assuming that $X$ is a Peano continuum, that is a compact, connected, and locally connected metric space, but we assume from now on that $f\colon S\to S$ is a cw$_F$-hyperbolic homeomorphism of a closed surface $S$. We will use the symbol $\sigma$ to denote both $s$ and $u$. Since this will appear several times in what follows, we will not write in all of them that some statement holds for $\sigma=s$ and $\sigma=u$, we will simply say that it holds for $\sigma$.

    \begin{lemma}[\cite{katoconcerning} Thm. 1.6]\label{tamanho uniforme}
        For every $\eps>0$, there exists $\delta>0$ such that $$\diam(C^{\sigma}_\eps(x))>\delta \,\,\,\,\,\, \text{for every} \,\,\,\,\,\, x\in S,$$ where $\diam(A)=\sup\{d(a,b); a,b\in A\}$ denotes the diameter of the set $A$.
    \end{lemma}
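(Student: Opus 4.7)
The plan is to argue by contradiction using the $cw$-local-product-structure together with compactness of $S$; I will treat $\sigma = s$, and the case $\sigma = u$ follows by applying the same argument to $f^{-1}$. Suppose there exist $\eps > 0$ and a sequence $x_n \in S$ with $\diam(C^s_\eps(x_n)) \to 0$. By compactness of $S$ and of the hyperspace of closed subsets of $S$ in the Hausdorff metric, I may pass to subsequences so that $x_n \to x$ and $C^s_\eps(x_n) \to \{x\}$ in Hausdorff distance.

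The next step is to use the $cw$-local-product-structure to reveal unexpected unstable thickness at the limit point $x$. Let $\delta_0 > 0$ be the LPS constant at scale $\eps$. For each $y \in B(x, \delta_0/2) \setminus \{x\}$ and each $n$ large enough that $d(x_n, y) < \delta_0$, pick $z_n \in C^s_\eps(x_n) \cap C^u_\eps(y)$. Since $z_n \in C^s_\eps(x_n)$ and $C^s_\eps(x_n) \to \{x\}$, one has $z_n \to x$; closedness of $C^u_\eps(y)$ then forces $x \in C^u_\eps(y)$. The continuum in $W^u_\eps(y)$ joining $x$ and $y$ lies inside $W^u_{2\eps}(x)$, giving $y \in C^u_{2\eps}(x)$. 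Letting $y$ vary yields the inclusion $B(x, \delta_0/2) \subseteq C^u_{2\eps}(x)$, and gluing continua through $x$ shows $B(x, \delta_0/2) \subseteq C^u_{4\eps}(p)$ for every $p \in B(x, \delta_0/2)$.

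The final step is to contradict $cw$-expansiveness: choosing $\eps < c/4$ for the expansivity constant $c$, every $p \in B(x, \delta_0/2)$ satisfies $W^u_c(p) \supseteq B(x, \delta_0/2)$. To reach the desired contradiction I would need to exhibit some $p$ near $x$ whose local stable continuum $C^s_c(p)$ meets $B(x, \delta_0/2)$ in a nondegenerate subcontinuum, producing a nondegenerate connected subset of $W^s_c(p) \cap W^u_c(p)$ that violates total disconnectedness. I expect this to be the main obstacle, since the contradiction hypothesis only controls the diameters of $C^s_\eps(x_n)$ along the sequence accumulating on $x$, and extracting a nearby $p$ with nontrivial stable continuum is close to the statement being proved. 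This technical point is handled in Kato's paper \cite{katoconcerning} by a careful iteration/invariance argument that coordinates the scales $\eps, 2\eps, 4\eps, c$ with the LPS constant $\delta_0$.
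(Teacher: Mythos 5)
The paper offers no proof of this lemma at all: it is imported verbatim as Theorem~1.6 of Kato's paper \cite{katoconcerning}, whose argument uses only cw-expansiveness together with the Peano-continuum structure of the ambient space (iterating small continua through a given point and applying a uniform-expansivity/compactness argument in the hyperspace). Kato's proof does not use the $cw$-local-product-structure, so it applies to all cw-expansive homeomorphisms of Peano continua, not just cw-hyperbolic ones. Your proposal is therefore a genuinely different route and, notably, one that requires the stronger hypothesis of cw-hyperbolicity; the last sentence of your write-up, attributing to Kato a coordination with ``the LPS constant $\delta_0$,'' misdescribes his argument, since no such constant appears there.

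Within that LPS-based framework, your intermediate steps are sound: passing to a Hausdorff-convergent subsequence with $C^s_\eps(x_n)\to\{x\}$, using LPS to produce $z_n\in C^s_\eps(x_n)\cap C^u_\eps(y)$, taking limits to get $x\in C^u_\eps(y)$ by closedness, and dragging the continuum from $y$ to $x$ to obtain $y\in C^u_{2\eps}(x)$, hence $B(x,\delta_0/2)\subseteq C^u_{2\eps}(x)$. But the gap you flag at the end is real, and in my view it is not merely a technical loose end --- it is essentially circular. To contradict cw-expansiveness you must exhibit a nondegenerate continuum lying simultaneously in $W^s_c(p)$ and $W^u_c(p)$ for some $p$; from $B(x,\delta_0/2)\subseteq C^u_{2\eps}(x)$ the only natural candidate is a nondegenerate piece of $C^s_\eps(p)$ inside that ball, and the nondegeneracy of local stable continua is precisely what the lemma asserts. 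Nor can you invoke Corollary~\ref{interior} to say that an unstable continuum cannot contain an open ball: the proof of that corollary uses Lemma~\ref{tamanho uniforme}, so that route is also circular. To close the gap you would have to independently establish, for at least one point near $x$, that its local stable continuum is nondegenerate, and the standard way to do that is exactly Kato's Peano-continuum argument, at which point the LPS detour becomes superfluous.
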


    \begin{corollary}\label{interior}
        $\inte C^\sigma_\eps(x) = \emptyset$ for every $x\in S$ and $\eps\in(0,\frac{c}{2})$.
    \end{corollary}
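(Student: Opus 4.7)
The plan is a proof by contradiction using Lemma \ref{tamanho uniforme} together with the boundary bumping theorem from continuum theory. Assume $\inte C^s_\eps(x)\neq\emptyset$ for some $x\in S$ (the $\sigma=u$ case is symmetric). Since $C^s_\eps(x)$ is the connected component of $x$ in the closed set $W^s_\eps(x)$, it is closed, so one can pick $y\in\inte C^s_\eps(x)$ and $\rho>0$ with $\overline{B_\rho(y)}\subset C^s_\eps(x)$. Fix any $\eps'\in(0,c/2)$ and apply Lemma \ref{tamanho uniforme} to obtain a uniform $\delta_0>0$ with $\diam C^u_{\eps'}(y)>\delta_0$; shrinking $\rho$ so that $\rho<\delta_0/2$ guarantees $C^u_{\eps'}(y)\not\subset V$, where $V:=B_\rho(y)$.

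The goal now is to extract a connected, non-degenerate subset of $W^s_c(y)\cap W^u_c(y)$, which will contradict $cw$-expansivity. The boundary bumping theorem, applied to the continuum $C^u_{\eps'}(y)$ and the open set $V$, ensures that every connected component of $C^u_{\eps'}(y)\cap V$ has closure meeting $\partial V$. Let $K$ be the component containing $y$; then $\overline K\subset\overline V\cap C^u_{\eps'}(y)$ is a non-degenerate continuum, since it contains $y$ together with at least one point at distance $\rho$ from $y$. For any $z\in\overline K\subset\overline V\subset C^s_\eps(x)$, the triangle inequality through $x$ yields $d(f^n(z),f^n(y))\leq 2\eps<c$ for every $n\geq 0$, so $z\in W^s_c(y)$; and $z\in C^u_{\eps'}(y)\subset W^u_c(y)$ because $\eps'<c$. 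Thus $\overline K$ is a connected, non-degenerate subset of $W^s_c(y)\cap W^u_c(y)$, contradicting that this intersection is totally disconnected.

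The key tool is the boundary bumping theorem, which upgrades the mere fact that $C^u_{\eps'}(y)$ exits $V$ into a genuine connected piece lying inside $\overline V$; it is this connected piece that makes the $cw$-expansivity obstruction bite. The only real subtlety is bookkeeping of constants: the hypothesis $\eps<c/2$ is used precisely to force $2\eps<c$ after the triangle inequality through $x$, while $\eps'$ is taken in $(0,c/2)$ so that automatically $\eps'<c$. I do not foresee any serious obstacle beyond this constant-chasing.
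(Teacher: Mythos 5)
Your proof is correct and follows essentially the same strategy as the paper's: take an interior point $y$, use the uniform lower bound on $\diam C^u(y)$ to force $C^u(y)$ to exit a small ball inside $C^s_\eps(x)$, and extract a non-degenerate continuum in the intersection to contradict cw-expansiveness. You spell out two steps the paper leaves implicit (or slightly garbles): the boundary bumping lemma to upgrade ``exits the ball'' into a non-degenerate subcontinuum of $C^u(y)$ inside $\overline{B_\rho(y)}\subset C^s_\eps(x)$, and the triangle inequality through $x$ to relocate the $c$-dynamical balls to be centered at $y$, landing the continuum inside $W^s_c(y)\cap W^u_c(y)$ (the paper writes $C^s_c(x)\cap C^u_c(x)$ at the end, which appears to be a typo for the object centered at $y$, since $x$-centered unstable control is not available). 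The extra parameter $\eps'$ is harmless but unnecessary — you could just reuse $\eps$ since $\eps<c/2<c$.
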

    \begin{proof}
        The following is the proof for $\sigma=s$, but for $\sigma=u$ the proof is analogous.
        By contradiction, assume that $y\in \inte C^s_\eps(x)\neq \emptyset$ for some $x\in X$ and $\eps<\frac{c}{2}$. Since $\diam C^u_\eps(y)> \delta$ for some $\delta>0$, then $C^s_\eps(x)\cap C^u_\eps(y)$ contains a non-trivial continuum. By the choice of $\eps$, we have that $C^u_\eps(y)\subset C^u_c(y)$, then $C^s_c(x)\cap C^u_c(x)$ contains a non-trivial continuum, which contradicts the $cw$-expansiveness.
    \end{proof}
    
    Let $\mathcal{C}$ denote the space of all sub-continua of $S$ and $\mathcal{C}_\delta$ denote the set of all sub-continua of $S$ with diameter smaller than $\delta$. Let $\mathcal{C}^s$ and $\mathcal{C}^u$ denote the set of all stable and unstable continua of $f$, respectively. More precisely,
    $$\mathcal{C}^s=\{C\in\mathcal{C}; \,\,\, \diam(f^n(C))\to0, \,\,\, n\to\infty\}$$
    $$\mathcal{C}^u=\{C\in\mathcal{C}; \,\,\, \diam(f^n(C))\to0, \,\,\, n\to-\infty\}.$$ The following lemma is actually a characterization of cw-expansiveness with respect to local stable/unstable continua.
    \begin{lemma}[\cite{artigue_2018} Prop. 2.3.1]\label{cwexpansivecarac}
    \,
        \begin{enumerate}
            \item There exists $\eps^*>0$ such that $C^\sigma_{\eps^*}\subset C^\sigma$,
            \item For all $\eps>0$ there exists $\delta>0$ such that $C^\sigma\cap C_\delta \subset C^\sigma_\eps$.
        \end{enumerate}
    \end{lemma}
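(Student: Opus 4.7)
I would prove both items by contradiction via compactness of the hyperspace of subcontinua of $S$ in the Hausdorff metric, combined with the cw-expansiveness constant $c$.

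For item (1), fix $\eps^*<c/2$, let $x\in S$ be arbitrary, and set $C:=C^s_{\eps^*}(x)$. If $\diam(f^n(C))\not\to 0$, extract $n_k\to\infty$ with $\diam(f^{n_k}(C))\geq\alpha>0$, and by compactness pass to subsequences so that $f^{n_k}(x)\to p$ and $f^{n_k}(C)\to K$ in Hausdorff, with $K$ a continuum containing $p$ of diameter at least $\alpha$. The key step is to show $K\subset W^s_c(p)\cap W^u_c(p)$: any $y\in K$ is the limit of $y_k=f^{n_k}(v_k)$ with $v_k\in C^s_{\eps^*}(x)$, so $d(f^{n_k+j}(v_k),f^{n_k+j}(x))\leq\eps^*$ whenever $n_k+j\geq 0$; taking $k\to\infty$ via continuity of $f^j$ and the convergence $f^{n_k+j}(x)=f^j(f^{n_k}(x))\to f^j(p)$ yields $d(f^j(y),f^j(p))\leq\eps^*<c$ for every $j\in\Z$. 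Thus $K$ is a nontrivial connected subset of $W^s_c(p)\cap W^u_c(p)$ containing $p$, contradicting cw-expansiveness. The unstable case is symmetric.

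For item (2), fix $\eps>0$ small enough that $\omega(\eps)\leq c/2$, where $\omega$ is a modulus of continuity of $f$, and suppose for contradiction there exist $C_k\in\mathcal{C}^s$ with $\diam(C_k)\to 0$ yet $\sup_{n\geq 0}\diam(f^n(C_k))>\eps$. Choose $n_k\geq 0$ realizing $\alpha_k:=\max_{n\geq 0}\diam(f^n(C_k))$, attained because $C_k\in\mathcal{C}^s$ forces $\diam(f^n(C_k))\to 0$; uniform continuity applied finitely many times to the shrinking $C_k$ forces $n_k\to\infty$. Set $D_k:=f^{n_k}(C_k)$ and pass to Hausdorff limits $D_k\to D$ and $f^{n_k}(x_k)\to p\in D$ for $x_k\in C_k$. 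By maximality, $\diam(f^{n_k+m}(C_k))\leq\alpha_k$ for every $m\geq 0$ and $\diam(f^{n_k-j}(C_k))\leq\alpha_k$ for $1\leq j\leq n_k$; since $n_k\to\infty$, passing to the limit gives $\diam(f^m(D))\leq\alpha:=\lim\alpha_k$ for every $m\in\Z$, hence $D\subset W^s_\alpha(p)\cap W^u_\alpha(p)$ is a continuum of diameter at least $\eps$ containing $p$.

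The main obstacle is ensuring $\alpha\leq c$, so that $D$ provides a nontrivial continuum in $W^s_c(p)\cap W^u_c(p)$ and contradicts cw-expansiveness; a priori $\alpha_k$ is only bounded by $\diam(S)$. To secure the bound, one exploits that every $c'\leq c$ is also a cw-expansiveness constant, and iterates a first-crossing argument at successive thresholds $c,c/2,c/4,\dots$: if $\alpha_k$ exceeds a threshold $\theta$, the first time the diameter surpasses $\theta/2$ yields a continuum of diameter at most $\omega(\theta/2)$, producing a descending ladder of limit continua that eventually lie in $W^s_{c'}(p)\cap W^u_{c'}(p)$ for some $c'\leq c$. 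Making this bookkeeping rigorous, in particular handling moduli of continuity that are not Lipschitz and coordinating it with Lemma \ref{tamanho uniforme} to avoid degenerate limits, is the delicate technical step. Once $\alpha\leq c$ is in hand, the second paragraph above closes the contradiction.
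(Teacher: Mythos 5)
This lemma is cited by the paper from Artigue's thesis (Proposition 2.3.1) and is not proved in the text, so there is no in-paper argument to compare against; I assess your proposal on its own merits against the standard argument.

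Your proof of item (1) is correct and is essentially the standard one: extract $n_k\to\infty$ with $\diam(f^{n_k}(C))\geq\alpha$, pass to Hausdorff and pointwise limits, and verify $d(f^j(y),f^j(p))\leq\eps^*<c$ for every $j\in\Z$ by observing that for fixed $j$ one eventually has $n_k+j\geq0$; the limit $K$ is then a nondegenerate subcontinuum of $W^s_c(p)\cap W^u_c(p)$, contradicting cw-expansiveness.

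For item (2) you correctly locate the gap, but your proposed repair does not close it. Taking $n_k$ to realize the maximum of $\diam(f^n(C_k))$ over $n\geq0$ gives, in the limit, a nondegenerate $D$ with $\diam(f^m(D))\leq\alpha$ for all $m\in\Z$, and cw-expansiveness only forbids this when $\alpha\leq c$; a priori $\alpha\leq\diam(S)$, as you say. The ``descending ladder of first crossings'' cannot furnish the bound: a first crossing of a threshold $\theta$ controls only the \emph{backward} iterates of the crossing continuum (they all have diameter $\leq\theta$), never the forward ones, so no matter how many thresholds one descends one never obtains a continuum with a two-sided diameter bound below $c$, which is what a contradiction with cw-expansiveness requires. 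The clean fix is a normalization step: fix $\theta\in(\eps,c)$; since $C_k\in\mathcal{C}^s$ there is $N_k$ with $\diam(f^n(C_k))<\eps$ for $n>N_k$, so along an order arc $\phi_k$ in the hyperspace from a singleton of $C_k$ up to $C_k$ the function $t\mapsto\max_{0\leq n\leq N_k}\diam(f^n(\phi_k(t)))$ is continuous and increasing from $0$ to $\alpha_k$; assuming $\alpha_k>\theta$ (otherwise your maximum argument already gives $\alpha\leq\theta<c$), the intermediate value theorem yields a subcontinuum $C_k'\subset C_k$ with $\max_{n\geq0}\diam(f^n(C_k'))=\theta$ exactly and still $\diam(C_k')\to0$. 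Running your second paragraph on $C_k'$ produces a nondegenerate $D$ with $\diam(D)=\theta$ and $\diam(f^m(D))\leq\theta<c$ for every $m\in\Z$, which is the desired contradiction. So the overall strategy for (2) is salvageable, but the missing ingredient is this subcontinuum normalization, not the ladder you describe.
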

    The following lemma is similar to Lemma \textcolor{red}{4.1} in \cite{Hiraide1987ExpansiveHO} but assuming cw-expansiveness. Let $B_{\delta}(x)$ denote the ball of radius $\delta$ centered at $x$, that is the set of points whose distance to $x$ is less than $\delta$. 
    \begin{lemma}\label{epsigual2eps}
        For each $0<\eps<\frac{\eps^*}{4}$ there exists $\delta\in(0,\eps)$ such that $$C^\sigma_\eps(x)\cap B_\delta(x)= C^\sigma_{2\eps}(x)\cap B_\delta(x)$$ for every $x\in S$.
    \end{lemma}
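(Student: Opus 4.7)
The plan is a contradiction argument modeled on Lemma 4.1 of Hiraide~\cite{Hiraide1987ExpansiveHO}, lifted to the continuum level so that cw-expansiveness replaces expansiveness. Suppose the conclusion fails: for some $\eps \in (0, \eps^*/4)$ there exist sequences $(x_n), (y_n) \subset S$ with $d(x_n, y_n) \to 0$ and $y_n \in C^s_{2\eps}(x_n) \setminus C^s_\eps(x_n)$. Lemma~\ref{cwexpansivecarac}(1) together with $2\eps < \eps^*$ places $C_n := C^s_{2\eps}(x_n)$ in $\mathcal{C}^s$. Since $C^s_\eps(x_n)$ is the connected component of $x_n$ in $W^s_\eps(x_n)$ and $y_n$ lies outside it, I produce a point $z_n \in C_n \setminus W^s_\eps(x_n)$: in the principal case $y_n \notin W^s_\eps(x_n)$ I set $z_n := y_n$, and otherwise such a $z_n$ exists by connectedness of $C_n$ since $x_n$ and $y_n$ lie in distinct components of $C_n \cap W^s_\eps(x_n)$. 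Let $k_n \geq 0$ be the smallest integer with $d(f^{k_n}(z_n), f^{k_n}(x_n)) > \eps$.

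When $z_n = y_n$, uniform continuity of each iterate $f^j$ combined with $d(x_n, y_n) \to 0$ forces $k_n \to \infty$. Set $a_n := f^{k_n}(x_n)$, $b_n := f^{k_n}(z_n)$, and $F_n := f^{k_n}(C_n) \subset C^s_{2\eps}(a_n)$, so that $\eps < d(a_n, b_n) \leq 2\eps$. Along subsequences, $a_n \to a^*$, $b_n \to b^*$ with $a^* \neq b^*$, and $F_n \to F$ in the Hausdorff metric, giving a continuum $F$ that contains both limit points. The forward inclusion $F_n \subset W^s_{2\eps}(a_n)$ passes to $F \subset C^s_{2\eps}(a^*)$. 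For the unstable estimate, for every $0 \leq j \leq k_n$ and every $w \in F_n$ one has $f^{-j}(w) \in f^{k_n - j}(C_n) \subset C^s_{2\eps}(f^{k_n - j}(x_n))$, so $d(f^{-j}(w), f^{-j}(a_n)) \leq 2\eps$; because $k_n \to \infty$, this bound persists in the limit for every fixed $j \geq 0$, and hence $F \subset C^u_{2\eps}(a^*)$. Taking $\eps^*$ no larger than the cw-expansivity constant $c$ (which costs no generality, since $\eps^*$ in Lemma~\ref{cwexpansivecarac}(1) is only a small threshold), the non-degenerate continuum $F$ then lies in $W^s_c(a^*) \cap W^u_c(a^*)$, contradicting the total disconnectedness of this set.

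The main obstacle is the secondary case $y_n \in W^s_\eps(x_n)$, where the separating $z_n$ need not approach $x_n$ and so $k_n$ is not automatically divergent. To handle it I would apply the cw-local-product-structure to the close pair $(x_n, y_n)$ to produce $p_n \in C^s_{\eps/2}(x_n) \cap C^u_{\eps/2}(y_n)$, note that $p_n \in C^s_\eps(x_n)$, and argue that either $p_n = y_n$---directly contradicting $y_n \notin C^s_\eps(x_n)$---or else the distinct points $p_n \neq y_n$ inside the non-trivial continuum $C^u_{\eps/2}(y_n)$, combined with the forward invariance of $C_n$ under iteration, force the escape time of every separating $z_n$ to diverge with $n$, thereby reducing back to the main case and closing the argument.
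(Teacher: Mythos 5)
Your approach is genuinely different from the paper's. The paper's proof is two lines: it invokes Lemma \ref{cwexpansivecarac}(2) directly. Since $C^\sigma_{2\eps}(x)\in\mathcal{C}^\sigma$ by Lemma \ref{cwexpansivecarac}(1), any subcontinuum of it with diameter at most $\delta^*$ (where $\delta^*$ is supplied by Lemma \ref{cwexpansivecarac}(2) for the value $\eps$) is automatically $\eps$-stable, and with $\delta=\delta^*/2$ the relevant connected component near $x$ has diameter at most $\delta^*$ and therefore sits inside $C^\sigma_\eps(x)$. That lemma already packages exactly the uniform-contraction statement you are trying to re-derive by compactness, so the paper's route is both shorter and sidesteps the case split entirely.

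Your Case 1 (when $y_n\notin W^s_\eps(x_n)$) is a correct Hiraide-style escape-time argument: $k_n\to\infty$, Hausdorff limits of $f^{k_n}(C_n)$ give a non-degenerate continuum $F\ni a^*,b^*$ with $d(a^*,b^*)\ge\eps$, and the forward bound together with the bound on $f^{-j}$ for $0\le j\le k_n$ (which persists for every fixed $j$ as $k_n\to\infty$) puts $F\subset W^s_{2\eps}(a^*)\cap W^u_{2\eps}(a^*)$, contradicting cw-expansiveness once $2\eps<\eps^*\le c$. That part is fine.

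However, Case 2 is a genuine gap, and the sketch you give does not close it. When $y_n\in W^s_\eps(x_n)$ but lies in a different component of $W^s_\eps(x_n)$ than $x_n$, the point $z_n\in C_n\setminus W^s_\eps(x_n)$ furnished by connectedness need not be close to $x_n$, so its escape time is bounded and the compactness argument produces nothing. Your proposed repair produces $p_n\in C^s_{\eps/2}(x_n)\cap C^u_{\eps/2}(y_n)$, but the step ``the distinct points $p_n\neq y_n$ inside $C^u_{\eps/2}(y_n)$, combined with forward invariance of $C_n$, force the escape time of every separating $z_n$ to diverge'' is not a valid inference: having two nearby points unstably close to $y_n$ says nothing about the forward escape time of a far-away $z_n$, and the cw-local-product-structure gives intersections of local stable and unstable continua, not control on escape times. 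To finish, you should replace the ad hoc Case 2 discussion by the observation that the connected component of $x$ in $C^\sigma_{2\eps}(x)\cap\overline{B_\delta(x)}$ is a subcontinuum of a set in $\mathcal{C}^\sigma$ of diameter at most $2\delta$, and then apply Lemma \ref{cwexpansivecarac}(2) with $\delta=\delta^*/2$ — which is precisely the paper's argument and does not bifurcate into cases.
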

    \begin{proof}
        For each $0<\eps<\frac{\eps^*}{4}$ let $\delta^*\in(0,\eps)$ be given by Lemma \ref{cwexpansivecarac} and $\delta=\frac{\delta^*}{2}$. Since $\eps<\frac{\eps^*}{4}$, it follows that $$C^\sigma_\eps(x)\subset C^\sigma_{2\eps}(x)\in C^\sigma.$$ The choice of $\delta$ ensures that $C^\sigma_{2\eps}(x)\cap B_\delta(x)\subset C^\sigma_\eps(x)$, and, hence, $$C^\sigma_{2\eps}(x)\cap B_\delta(x) = C^\sigma_\eps(x)\cap B_\delta(x).$$
    \end{proof}
    In the following lemma, the hypothesis of cw$_F$-expansiveness will be necessary. It was first proved in \cite{artigue_2018} for cw$_F$-expansive homeomorphisms, but we include a different proof using cw$_F$-hyperbolicity. In this article, an arc is a subset of $S$ homeomorphic to $[0,1]$.
    
    \begin{lemma}[\cite{artigue_2018} Thm. 6.7.1]\label{conexidadelocal}
    There exists $\eps>0$ such that $C^\sigma_\eps(x)$ is locally connected for every $x\in S$.
    \end{lemma}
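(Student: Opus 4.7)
The plan is to argue by contradiction: failure of local connectedness of $C^s_\eps(x)$ at some point forces, via the cw-local-product-structure, infinitely many distinct points in an intersection $C^s_c(\cdot)\cap C^u_c(\cdot)$, contradicting $cw_F$-expansiveness. First I would reduce the statement to the basepoint: fix $\eps<\eps^*/4$ small enough that $3\eps<c$ and Lemma~\ref{epsigual2eps} produces a $\delta_0\in(0,\eps)$ with $C^\sigma_\eps(a)\cap B_{\delta_0}(a)=C^\sigma_{2\eps}(a)\cap B_{\delta_0}(a)$ for every $a\in S$. A triangle-inequality check then shows that whenever $y\in C^s_\eps(x)$, the continua $C^s_\eps(x)$ and $C^s_\eps(y)$ coincide on $B_{\delta_0}(y)$, so it suffices to prove that $C^s_\eps(y)$ is locally connected at the basepoint $y$ for every $y\in S$.

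Suppose for contradiction that $C^s_\eps(y)$ is not locally connected at $y$ for some $y\in S$. I would pick $r_0\in(0,\delta_0)$ below the uniform lower bound of Lemma~\ref{tamanho uniforme}, and extract a sequence $y_n\to y$ in $C^s_\eps(y)$ such that, writing $K_n$ for the connected component of $y_n$ in $C^s_\eps(y)\cap\overline{B_{r_0}(y)}$, no $K_n$ contains $y$. The boundary-bumping theorem then forces each $K_n$ to meet $\partial B_{r_0}(y)$. By compactness of the hyperspace of subcontinua of $\overline{B_{r_0}(y)}$ in the Hausdorff metric, pass to a subsequence with $K_n\to K$, a continuum with $y\in K$, $K$ meeting $\partial B_{r_0}(y)$, and $K\subset C^s_\eps(y)$ (closedness of $W^s_\eps(y)$ together with connectedness of the limit).

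Next, I would pick $p\in K$ with $p\neq y$ and points $p_n\in K_n$ with $p_n\to p$. For a small scale $\eps'\le\eps$ with $\eps+\eps'<c$, the cw-local-product-structure applied at $p$ provides $w_n\in C^s_{\eps'}(p)\cap C^u_{\eps'}(p_n)$ for $n$ large. A triangle-inequality argument, using $p,p_n\in C^s_\eps(y)$ (hence $C^s_\eps(y)\subset C^s_{3\eps}(p)\subset C^s_c(p)$ by the bound $3\eps<c$), places every $w_n$ in the finite set $C^s_c(p)\cap C^u_c(p)$ guaranteed by $cw_F$-expansiveness.

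The hard part, and the main obstacle, will be concluding that infinitely many of the $w_n$'s are genuinely distinct. If $w_n\equiv w$ along a subsequence for some fixed $w$, a limit argument using closedness of $W^s_{\eps'}(w)$ yields $p\in C^s_{\eps'}(w)\cap C^u_{\eps'}(w)$, a strong constraint that forces $p$ into one of finitely many special points near $w$. One then has to use that $p$ varies over the non-degenerate continuum $K\setminus\{y\}$, possibly letting $\eps'$ shrink as well, to arrange a choice of $p$ that avoids this degenerate collapse and produces infinitely many distinct $w_n\in C^s_c(p)\cap C^u_c(p)$, contradicting the finiteness of that intersection. Making this selection precise, controlling the local product structure uniformly as $p$ varies and as $\eps'$ shrinks, is the crux of the argument.
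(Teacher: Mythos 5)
There is a genuine gap in the proposal, and it lives exactly where you flag the difficulty. When you produce $w_n\in C^s_{\eps'}(p)\cap C^u_{\eps'}(p_n)$, the containment $w_n\in C^s_c(p)$ is fine, but the claim that also $w_n\in C^u_c(p)$ does not follow. The points $p$ and $p_n$ both lie on $C^s_\eps(y)$, so they stay close under \emph{forward} iteration; there is no control over their backward orbits, so $C^u_{\eps'}(p_n)$ need have nothing to do with $C^u_c(p)$. Thus the $w_n$'s are not forced into the single finite set $C^s_c(p)\cap C^u_c(p)$, and the contradiction with $cw_F$-expansiveness never materialises. And, independently, the final paragraph openly leaves the distinctness of the $w_n$'s unresolved; since all $w_n$ lie on the one arc $C^s_{\eps'}(p)$ and each is produced by a separate application of the local product structure, there is no mechanism preventing them from collapsing to a single point.

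The paper's proof avoids both problems by reorganising who is fixed and who varies. Instead of fixing a stable continuum $C^s_{\eps'}(p)$ and intersecting it with many unstable continua $C^u_{\eps'}(p_n)$, one extracts from the failure of local connectedness a sequence of \emph{pairwise disjoint} stable arcs $P_n\subset C^s_\eps(x)$ accumulating on a nondegenerate arc $P^*$. Taking an interior point $y$ of $P^*$ and a point $z$ on the far side of all the $P_n$'s (found using $\inte C^s_{2\eps}(y)=\emptyset$), the cw-local-product-structure forces $C^u_{r/4}(y)$ to reach $C^s_{r/4}(z)$. Because this single unstable continuum is connected and must pass from $P^*$ to the far side of the $P_n$'s, it crosses infinitely many of the disjoint arcs $P_n$; each crossing is a distinct point of $C^u_\eps(y)\cap C^s_\eps(y)$, and $cw_F$-expansiveness is contradicted. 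The crucial ideas your sketch is missing are (i) the disjointness of the accumulating stable pieces, which manufactures distinctness for free, and (ii) using \emph{one} unstable continuum that is geometrically forced to traverse all of them, rather than a family of unstable continua tied to different basepoints.
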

    
    \begin{proof}
        We prove for $\sigma=s$ but the proof for $\sigma=u$ is similar. Let $0<\eps<\frac{c}{4}$ and by contradiction assume that $C^s_{\eps}(x)$ is not locally connected for some $x\in S$. Then we can consider a sequence of arcs $(P_n)_{n\in\N}\subset C^s_{\eps}(x)$ (as in the proof of Proposition 3.1 in \cite{Hiraide1987ExpansiveHO}) such that $P_i\cap P_j=\emptyset$ if $i\neq j$ and $\dist(P_n, P^*)\to 0$ for some non-trivial arc $P^*\subset C^s_\eps(x)$ (here we also denote by $d$ the Hausdorff distance on the space of continua). Let $y\in P^*$ be an interior point of $P^*$ and consider $r\in(0,\eps)$ such that \begin{enumerate}
            \item $P^*$ separates $B_r(y)$, and
            \item $P_n$ separates $B_r(y)$ for every $n>n_0$ and for some $n_0\in\N$.
        \end{enumerate}
        We can assume that every $P_n$ is contained in the same component $A$ of $B_r(y)\setminus P^*$ taking a sub-sequence if necessary. The cw-local-product-structure ensures the existence of $\delta\in(0,\frac{r}{4})$ such that
        $$C^s_{\frac{r}{4}}(a)\cap C^u_{\frac{r}{4}}(b)\neq \emptyset \quad \text{ whenever } \quad \dist(a,b)< \delta.$$ Since $\inte C^s_{2\eps}(y)=\emptyset$, there exists $z\in A\cap B_{\delta}(y)$ such that $z\notin C^s_{2\eps}(y)$, and, hence, $$C^s_r(z)\cap C^s_{\eps}(y)=\emptyset.$$ In particular, $C^s_r(z)\cap P_n=\emptyset$ for every $n\in\N$. Choose $n_1>n_0$ such that $P_{n_1}\cap B_r(y)$ separates $P^*\cap B_r(y)$ and $C^s_{\frac{r}{4}}(z)$. The choice of $\delta$ ensures that $C^u_{\frac{r}{4}}(y)\cap C^s_{\frac{r}{4}}(z)\neq\emptyset$. In particular, $C^u_{\frac{r}{4}}(y)\cap P_{n_1}\neq\emptyset$, and, hence, $C^u_{\eps}(y)$ intersects an infinite number of distinct $P_n's$ (see Figure \ref{fig:locconexo}). This contradicts cw$_F$-expansiveness and finishes the proof.
        \begin{figure}[H]
            \centering
            \includegraphics[width=0.15\textwidth]{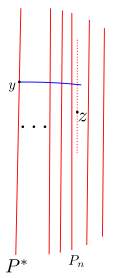}
            \caption{}
            \label{fig:locconexo}
        \end{figure}
    \end{proof}
    
    The following corollary is a consequence of this result. A subset $A\subset S$ is arcwise connected if for each pair of distinct points $x,y\in A$ there exists an arc $h\colon[0,1]\to A$ such that $h(0)=x$ and $h(1)=y$.
    
    \begin{corollary}\label{arccon}
        There exists $\eps>0$ such that $C^\sigma_\eps(x)$ is arcwise connected and locally arcwise connected for every $x\in S$. Moreover, for each pair of distinct points $y,z\in C^\sigma_\eps(x)$, there is a unique arc $\sigma(y,z;x)$ in $C^\sigma_\eps(x)$ connecting $y$ and $z$.
    \end{corollary}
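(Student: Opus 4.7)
The plan is to split the corollary into two independent statements: the arcwise and locally arcwise connectedness of $C^\sigma_\eps(x)$, which follows from Lemma \ref{conexidadelocal} via a classical result on Peano continua, and the uniqueness of the connecting arc, which requires ruling out simple closed curves in $C^\sigma_\eps(x)$ using cw$_F$-expansiveness and the cw-local-product-structure.

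Choose $\eps>0$ small enough that the conclusions of Lemmas \ref{cwexpansivecarac}, \ref{epsigual2eps}, and \ref{conexidadelocal} all apply simultaneously. Then $C^\sigma_\eps(x)$ is a compact, connected, locally connected metric space, i.e.\ a Peano continuum. By the classical Mazurkiewicz--Moore theorem, every Peano continuum is arcwise connected and locally arcwise connected, which establishes the first assertion.

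For uniqueness, I would argue by contradiction: if two distinct arcs $\al_1,\al_2\subset C^\sigma_\eps(x)$ joined $y$ and $z$, then by identifying the first parameter at which $\al_1$ leaves $\al_2$ and the first subsequent parameter at which it returns, one extracts a simple closed curve $\ga\subset C^\sigma_\eps(x)$. Fix $p\in\ga$ and a small bicollar neighborhood $V$ of $\ga$ near $p$, so that $\ga\cap V$ separates $V$ into two sides $V_1,V_2$. By Corollary \ref{interior}, $C^\sigma_\eps(x)$ has empty interior in $S$, so I can select points $q_n\to p$ lying in $V_1\setminus C^\sigma_\eps(x)$. The cw-local-product-structure yields points $r_n\in C^\sigma_{\eps/2}(q_n)\cap C^{\sigma'}_{\eps/2}(p)$, where $\sigma'$ denotes the dual label. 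Lemma \ref{tamanho uniforme} guarantees that the continuum $C^{\sigma'}_\eps(p)$ has diameter uniformly bounded below, so it must escape $V$. Combined with the two-sided bicollar structure of $\ga$ on the surface, this forces $C^{\sigma'}_\eps(p)\cap\ga$ to be infinite, producing infinitely many points of $C^\sigma_c(x)\cap C^{\sigma'}_c(p)$; after an appropriate recentering this yields an infinite intersection of the form $C^\sigma_c(p')\cap C^{\sigma'}_c(p')$ at a single centre, contradicting cw$_F$-expansiveness.

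The hardest step is the last one, namely converting the purely topological presence of $\ga$ into an infinite intersection at a \emph{common} centre. This is delicate because cw$_F$-expansiveness is formulated only for same-centre intersections whereas the cw-local-product-structure naturally produces intersections of continua with different centres. Bridging this gap requires a careful accumulation argument that exploits the local two-sided structure of $\ga$ on $S$ together with the uniform diameter lower bound provided by Lemma \ref{tamanho uniforme}.
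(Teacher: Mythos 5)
The first half of your proposal (Peano continuum $\Rightarrow$ arcwise and locally arcwise connected via the Hahn--Mazurkiewicz circle of results) matches the paper, which cites Theorem 5.9 and Theorem 6.29 of \cite{hall1955elementary} for exactly this.

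For uniqueness you depart from the paper and the departure contains a genuine gap. The paper's argument is short: two distinct arcs in $C^\sigma_\eps(x)$ joining $y$ and $z$ produce a simple closed curve $\gamma\subset C^\sigma_\eps(x)$; since $\eps$ is small, $\gamma$ is null-homotopic and bounds a disk $D\subset S$; a disk bounded by a local stable (resp.\ unstable) curve is impossible for a cw-expansive surface homeomorphism. The last step is the classical Hiraide/Lewowicz observation: iterating, $f^n(\gamma)$ stays small and bounds a small disk, so $f^n(D)$ is that small disk for every $n\geq 0$ (by induction using uniform continuity of $f$), hence $D\subset W^\sigma_{2\eps}(x)$ and $C^\sigma_{2\eps}(x)$ has nonempty interior, contradicting Corollary~\ref{interior}. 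No local product structure and no $cw_F$-finiteness is needed here; only cw-expansiveness is used, exactly as the paper says.

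Your route instead tries to manufacture an infinite intersection $C^\sigma_c\cap C^{\sigma'}_c$. The pivotal sentence, that because $C^{\sigma'}_\eps(p)$ escapes the bicollar $V$ and $\gamma$ is two-sided, the set $C^{\sigma'}_\eps(p)\cap\gamma$ must be infinite, is not justified. A continuum of uniformly bounded-below diameter passing through $p\in\gamma$ need only leave $V$ through one end; it does not have to cross $\gamma$ more than once, let alone infinitely often. The auxiliary sequences $q_n\to p$ and $r_n\in C^\sigma_{\eps/2}(q_n)\cap C^{\sigma'}_{\eps/2}(p)$ are introduced but never actually fed into the conclusion, so the infinite intersection is asserted rather than produced. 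You also mislocate the delicacy: the recentering you flag as the hard step is in fact routine (if $w\in C^\sigma_\eps(x)\cap C^{\sigma'}_\eps(p)$ then $C^\sigma_\eps(x)\subset C^\sigma_{2\eps}(w)$ and $C^{\sigma'}_\eps(p)\subset C^{\sigma'}_{2\eps}(w)$, giving a common centre $w$); the missing ingredient is the unproved infinitude. I would replace the whole uniqueness argument by the disk-bounded-by-a-stable-curve contradiction sketched above, which is both what the paper intends and considerably more economical.
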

    \begin{proof}
        From Lemma \ref{conexidadelocal} and Theorem 5.9 of \cite{hall1955elementary}, it follows that $C^\sigma_\eps(x)$ is a Peano space. Hence, Theorem 6.29 of \cite{hall1955elementary} ensures that $C^\sigma_\eps(x)$ is arcwise connected and locally arcwise connected. The uniqueness comes from the observation that two distinct arcs connecting $y$ and $z$ would create an open set bounded by a local stable (in the case $\sigma=s$) or local unstable (in the case $\sigma=u$) curve, which contradicts cw-expansiveness on surfaces.
    \end{proof}
    From now on we choose $\eps>0$ given by Corollary \ref{arccon} and $\delta\in(0,\eps)$ satisfying the Lemmas \ref{tamanho uniforme}, \ref{cwexpansivecarac}, and \ref{epsigual2eps}. Following the steps of Hiraide in \cite{Hiraide1987ExpansiveHO}, we define an equivalence relation in the set of arcs starting on $x$ and contained in $C^{\sigma}_{\eps}(x)$. 
    
    \begin{definition}
    Let $x\in S$, and $y,z\in C^{\sigma}_{\eps}(x)$. We write $y\sim z$ if \[
     \sigma(x,y;x)\cap \sigma(x,z;x) \supsetneq \{x\}.
    \] We define the number of stable/unstable separatrices at $x$ as $$P^\sigma(x)= \#(C^\sigma_\eps(x)/\sim).$$ Lemma \ref{epsigual2eps} ensures that the number of separatrices at $x$ does not depend on the choice of $\eps<\frac{\eps^*}{4}$. This explains the notation $P^\sigma(x)$ without $\eps$ being mentioned.
    \end{definition}
    
    The following lemma follows an idea present in the works of Lewowicz/Hiraide: stable and unstable separatrices must be alternated as in Figure \ref{fig:alternado}. In this step, the cw$_F$-hyperbolicity will also be necessary. Let $\partial B_{\delta}(x)$ denote the boundary of the ball $B_{\delta}(x)$, that is the set of points whose distance to $x$ equals $\delta$.
    
    \begin{figure}[H]
        \centering
        \includegraphics[width=0.3\textwidth]{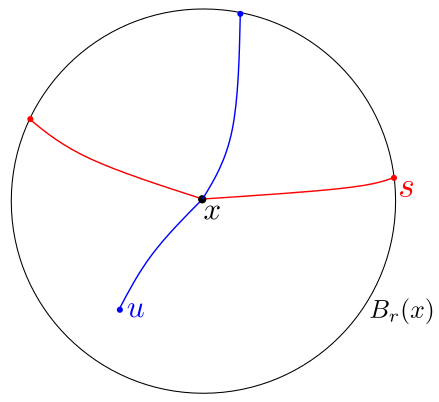}
        \caption{Lemma \ref{alternado}}
        \label{fig:alternado}
    \end{figure}
    
    \begin{lemma}\label{alternado}
        For each $x\in S$, there exists $r_0>0$ such that if $r\in(0,r_0)$, $y,z\in\partial B_{r}(x)$ are in different classes of $\sim$, $s(x,y;x)$ and $s(x,z;x)$ are arcs intersecting $\partial B_{r}(x)$ at one point and $A$ is a component of $B_{r}(x)\setminus (s(x,y;x)\cup s(x,z;x))$, then there is $a\in C^{u}_{\eps}(x)\cap A$ such that $u(x,a;x)\subset A$. 
    \end{lemma}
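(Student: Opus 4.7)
The plan is to argue by contradiction: I would assume no such $a$ exists, derive that a small relatively open piece of $A$ near $x$ lies in $C^s_\eps(x)$, and invoke Corollary \ref{interior} (empty interior of $C^s_\eps(x)$) to obtain a contradiction.

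First I would set up the contradiction. The assumption that no $a \in C^u_\eps(x)\cap A$ satisfies $u(x,a;x) \subset A$ translates, via the local arcwise connectedness of $C^u_\eps(x)$ from Corollary \ref{arccon}, into the existence of $\rho > 0$ such that $C^u_\eps(x) \cap A \cap B_\rho(x) = \emptyset$; that is, no unstable separatrix at $x$ enters $A$. I would then take $\delta>0$ from the cw-local-product-structure applied to $\eps$, and choose $r_0 \in (0,\min(\rho,\delta))$ small enough that $\overline{B}_\eps(p) \subset B_r(x)$ for every $p \in B_{r_0}(x)$ and that $s(x,y;x)$, $s(x,z;x)$ each meet $\partial B_r(x)$ at a single point for all $r \in (0,r_0)$.

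Next, for each $p \in A \cap B_{r_0}(x)$, the cw-local-product-structure produces $q = q(p) \in C^s_\eps(p) \cap C^u_\eps(x)$, and the stable arc $s(p,q;p)$ joins $p \in A$ to $q$ inside $B_r(x)$. Since $q \in C^u_\eps(x) \cap \overline{B}_\eps(x)$, there are two cases: either $q \notin A$ (the generic case), or $q \in A \setminus B_\rho(x)$ (the exceptional case). In the generic case, $s(p,q;p)$ must cross $s(x,y;x) \cup s(x,z;x)$ at a point $w \in C^s_\eps(p) \cap C^s_\eps(x)$. The orbit estimates then give $d(f^k(p),f^k(x)) \leq 2\eps$ for every $k\geq 0$, and the concatenation $s(p,w;p) \cup s(w,x;x)$ is a continuum contained in $W^s_{3\eps}(x)$ joining $p$ to $x$, whence $p \in C^s_{3\eps}(x)$. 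Provided $\eps$ has been chosen below $\eps^*/8$, iterating Lemma \ref{epsigual2eps} twice yields $C^s_\eps(x)\cap B_{\delta_0}(x) = C^s_{4\eps}(x)\cap B_{\delta_0}(x)$ for some $\delta_0 > 0$, so $p \in C^s_\eps(x)$. In the exceptional case, the arc $u(x,q;x)$ must cross $s(x,y;x) \cup s(x,z;x)$ at a point $w' \in C^u_\eps(x) \cap C^s_\eps(x)$; this intersection set is finite by cw$_F$-expansiveness, so the set of such exceptional $p$'s is nowhere dense in $A$. Combining, $A \cap B_{r_0'}(x) \cap C^s_\eps(x)$ contains a nonempty open subset of $S$, which contradicts Corollary \ref{interior}.

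The main obstacle I anticipate is the exceptional case: one must argue rigorously that the $p$'s with $q(p) \in A$ form a nowhere dense subset, so that the generic case still occupies an open set. This is where cw$_F$-expansiveness is indispensable, since it forces $C^u_\eps(x) \cap C^s_\eps(x)$ to be finite and hence confines the exceptional locus to finitely many stable arcs' pre-images under the selection $p \mapsto q(p)$. A secondary technical nuisance is synchronizing the smallness parameters so that the iteration of Lemma \ref{epsigual2eps} carries $p \in C^s_{3\eps}(x)$ down to $p \in C^s_\eps(x)$ in a neighborhood of $x$, which forces the bound $\eps < \eps^*/8$ on the $\eps$ fixed at the start of the section.
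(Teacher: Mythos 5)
Your strategy inverts the paper's: you try to show that a relatively open piece of $A$ near $x$ is swallowed by $C^s_\eps(x)$ and then invoke Corollary~\ref{interior}, while the paper uses Corollary~\ref{interior} in the opposite direction, to \emph{select} one point $b\in A$ near $x$ with $b\notin C^s_\eps(x)$, and then derives the contradiction from the cw-local-product-structure (showing $C^s_{r/4}(b)\subset A$ while $C^u_{r/4}(x)\cap A=\emptyset$, so these two continua cannot meet even though $b$ is close to $x$). The paper's direction is strictly cleaner because it only needs a single point and never has to worry about your ``exceptional case.''

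As written, your argument has a genuine gap in the very first step of the generic case. You ask to ``choose $r_0\in(0,\min(\rho,\delta))$ small enough that $\overline{B}_\eps(p)\subset B_r(x)$ for every $p\in B_{r_0}(x)$.'' This is impossible: since $r\in(0,r_0)$ and $r_0<\eps$, the ball $\overline{B}_\eps(p)$ has radius $\eps>r$ and cannot fit inside $B_r(x)$. Consequently, the stable arc $s(p,q;p)$, which a priori only lives in $C^s_\eps(p)\subset \overline{B}_\eps(p)$, need not stay in $B_r(x)$ at all; it can leave through $\partial B_r(x)$ without ever touching $s(x,y;x)\cup s(x,z;x)$, and the crossing point $w$ you need does not exist. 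The fix is to run the cw-local-product-structure at scale $r/4$ rather than $\eps$ (exactly as the paper does): take $\delta_r$ so that $C^s_{r/4}(a)\cap C^u_{r/4}(b)\neq\emptyset$ whenever $d(a,b)<\delta_r$; then $C^s_{r/4}(p)\subset B_{r/4}(p)$ automatically, so the continuum stays inside $B_r(x)$ and the boundary-crossing argument becomes available. A second, lesser gap is that your handling of the exceptional case is not a proof: you assert that ``the set of such exceptional $p$'s is nowhere dense'' by confining the crossing point $w'$ to the finite set $C^s_\eps(x)\cap C^u_\eps(x)$, but a finite set of possible $w'$-values does not by itself bound the set of $p$'s for which the selected $q(p)$ lands in $A$ (the selection $p\mapsto q(p)$ is neither canonical nor continuous). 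The paper sidesteps this entirely by choosing $b$ outside $C^s_\eps(x)$ from the start, so nothing exceptional can occur.
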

    
    \begin{proof}
        If $C^{s}_{\eps}(x)\cap C^{u}_{\eps}(x)=\{x\}$, let $r_0=\eps$, otherwise, $$C^{s}_{\eps}(x)\cap C^{u}_{\eps}(x)\supsetneq \{x\},$$ and we let $$r_0=\dist(x,(C^{s}_{\eps}(x)\cap C^{u}_{\eps}(x))\setminus\{x\}),$$ which is a positive number by $cw_F$-expansiveness.
        Let $x$, $r$, $y$, $z$, and $A$ as above, and suppose there is no unstable arc $u(x,a;x)\subset A$. Choose $\delta_r\in(0,\frac{r}{4})$, given by the  cw-local-product-structure, such that
        $$C_\frac{r}{4}(x)\cap C_\frac{r}{4}(y)\neq\emptyset \,\,\, \tn{  whenever  } \,\,\, d(x,y)<\delta_r.$$
        Lemma \ref{interior} assures the existence of 
        $$b\in A\setminus C^{s}_{\eps}(x) \,\,\,\,\,\, \text{with} \,\,\,\,\,\, \dist(b,x)<\frac{\delta_r}{2}.$$ It follows that $$C^{s}_{\frac{r}{4}}(b)\subset A.$$
        If $C^{s}_{\eps}(x)\cap C^{u}_{\eps}(x)=\{x\}$, then $C^{u}_{\frac{r}{4}}(x)\cap A=\emptyset$ since there is no unstable arc $u(x,a;x)\subset A$; otherwise, $C^{s}_{r}(x)\cap C^{u}_{r}(x)=\{x\}$ since $r<r_0$, and, hence, $C^{u}_{\frac{r}{4}}(x)\cap A=\emptyset$.

    \begin{figure}[H]
        \centering
        \includegraphics[width=0.3\textwidth]{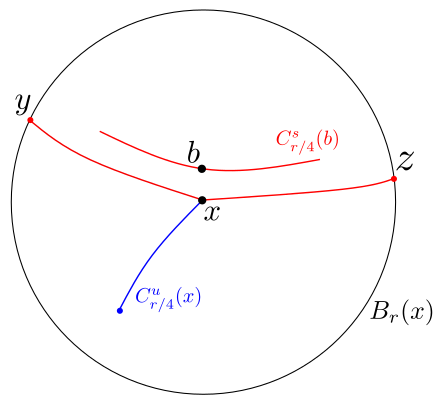}
        \caption{}
        \label{fig:alternado0}
    \end{figure}

        In both cases, we obtain $$C^{s}_{\frac{r}{4}}(b)\cap C^{u}_{\frac{r}{4}}(x)=\emptyset.$$
        This contradicts the choice of $\delta_r$ since $\dist(b,x)<\frac{\delta_r}{2}$ (see Figure \ref{fig:alternado0})
        and proves the existence of an unstable arc $u(x,a;x)\subset A$ and finishes the proof.
    \end{proof}

    The following corollary is a direct consequence of the previous Lemma. 

    \begin{corollary}\label{separatrices}
        $P^s(x)=P^u(x)$ for every $x\in S$. 
    \end{corollary}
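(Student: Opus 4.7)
The plan is to use Lemma \ref{alternado} in one direction to obtain $P^u(x) \geq P^s(x)$ and then invoke the symmetry of the cw$_F$-hyperbolicity hypothesis under $f \leftrightarrow f^{-1}$ to obtain the reverse inequality. Figure \ref{fig:alternado} already encodes the underlying geometric idea: stable and unstable separatrices at $x$ must alternate around $x$ in any sufficiently small ball.

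First, I would fix $x \in S$ and choose $r \in (0, r_0)$ small enough that, for each class in $C^s_\eps(x)/\sim$, one may pick a representative arc $s(x, y; x)$ meeting $\partial B_r(x)$ at the single point $y$; Lemma \ref{tamanho uniforme} guarantees that such a terminal point $y$ exists, and Corollary \ref{arccon} provides the uniqueness of the arc joining $x$ to $y$ inside $C^s_\eps(x)$. Two such representatives of distinct classes meet only at $x$ by definition of $\sim$, so the union of these arcs together with $\{x\}$ partitions $B_r(x)$ into open sectors, one between each pair of consecutive stable arcs. Applying Lemma \ref{alternado} to each sector $A$ yields a point $a_A \in C^u_\eps(x) \cap A$ whose unstable arc $u(x, a_A; x)$ is contained in $A$. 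Since any two unstable arcs representing the same class in $C^u_\eps(x)/\sim$ must share a non-trivial initial segment issuing from $x$ and therefore lie in a common component of $B_r(x)$ minus the stable skeleton, unstable arcs produced in different sectors are necessarily inequivalent. This assigns to each stable equivalence class a distinct unstable equivalence class, giving $P^u(x) \geq P^s(x)$.

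The reverse inequality will follow from the same argument with the roles of stable and unstable swapped. The only ingredients used in Lemma \ref{alternado} are the cw-local-product-structure and cw$_F$-expansiveness, both symmetric under $f \leftrightarrow f^{-1}$, so the corresponding statement with $s$ and $u$ exchanged holds as well. Combining the two inequalities yields $P^s(x) = P^u(x)$. The subtle point I foresee is the case in which $P^s(x)$ is infinite, since one cannot then appeal to a global cyclic ordering of the separatrices; but Lemma \ref{alternado} only needs two representative arcs at a time to produce an unstable separatrix in each bounded complementary component, which is enough to inject the set of stable classes into the set of unstable classes and conclude the equality of the two cardinalities in all cases.
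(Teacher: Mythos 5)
Your proposal is correct in substance and is the natural way to fill in the argument that the paper itself suppresses (the paper gives no proof, only the phrase ``direct consequence of the previous Lemma''). The core mechanism is the one intended: truncate one representative arc per stable class at a common small sphere $\partial B_r(x)$, apply Lemma~\ref{alternado} to each of the resulting sectors to produce one unstable arc per sector, observe that unstable arcs lying in different open sectors are $\sim$-inequivalent, and then swap the roles of $s$ and $u$ to get the reverse inequality. Two points deserve tightening. First, your appeal to Lemma~\ref{tamanho uniforme} is slightly misplaced: that lemma controls the diameter of the whole continuum $C^\sigma_\eps(x)$, not of a single separatrix class, so it does not by itself supply a representative of each class hitting $\partial B_r(x)$. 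What you actually need is the elementary observation that any class has a representative $y\neq x$, and then the sub-arc of $s(x,y;x)$ up to its first intersection with $\partial B_r(x)$ serves as the truncated representative, provided $r<\min_i d(x,y_i)$ over the finitely many classes under consideration. Second, your treatment of the potentially infinite case is more hand-waving than you let on: choosing two arcs at a time produces an unstable arc in each of the two complementary regions, but this does not canonically assign a distinct unstable class to each stable class, so you do not actually get the injection you claim; you only get $P^u(x)\geq n$ for every finite $n\leq P^s(x)$. In this paper the issue is moot, since Lemma~\ref{1or2} (whose proof uses Lemma~\ref{alternado} but not this corollary) shows $P^\sigma(x)\leq 2$, so one could simply invoke that bound first. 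It is worth being explicit that you are relying on finiteness rather than suggesting the infinite case is handled by the same words.
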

    
    We recall that $C^{\sigma}_{\eps}(x)$ is locally connected but the case $P^\sigma(x)=\infty$ is still not ruled out since we could have an infinite number of separatrices with diameter converging to zero. Also, in the expansive case, examples of pseudo-Anosov homeomorphisms with singularities containing a number of stable/unstable separatrices greater than two can be constructed. In the following lemma, we observe that these two scenarios do not occur in the case of cw$_F$-hyperbolic homeomorphisms. Indeed, the existence of bifurcation points contradicts cw$_F$-hyperbolicity.

    \begin{lemma}\label{1or2}
        $P^{\sigma}(x)\leq2$ for every $x\in S$.
    \end{lemma}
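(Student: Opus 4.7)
The plan is to argue by contradiction. Suppose $P^s(x)\geq 3$ for some $x\in S$; by Corollary \ref{separatrices} we also have $P^u(x)\geq 3$. Fix $r\in(0,r_0)$ (with $r_0$ from Lemma \ref{alternado}) and choose representatives $s_1,s_2,s_3\subset C^s_\eps(x)$ of three distinct $\sim$-classes meeting $\partial B_r(x)$. Applying Lemma \ref{alternado} to each pair $(s_i,s_{i+1})$ and using Corollary \ref{separatrices}, I would obtain three unstable arcs $u_1,u_2,u_3\subset C^u_\eps(x)$, one in each of the three sectors of $B_r(x)\setminus(s_1\cup s_2\cup s_3)$. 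Together these six arcs partition a small punctured ball $B_r(x)\setminus\{x\}$ into six open sectors $W_1,\dots,W_6$ bounded alternately by a piece of $s_i$ and a piece of $u_j$.

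The strategy is then to pick a point $y$ close to $x$ in one of these sectors and force $C^s_\eps(y)$ to cross $C^u_\eps(x)$ at more than one point, iterating the construction to violate cw$_F$-expansiveness. Concretely, fix a small $\eps'\in(0,\eps)$ and let $\delta'>0$ be the corresponding cw-local-product-structure constant. By Corollary \ref{interior}, $\inte C^s_\eps(x)=\emptyset$, so I can choose $y\in W_1\cap B_{\delta'/2}(x)$ with $y\notin C^s_\eps(x)\cup C^u_\eps(x)$. By Lemma \ref{tamanho uniforme}, the stable continuum $C^s_\eps(y)$ has diameter bounded below by some uniform $\eta>0$, while $y$ itself sits arbitrarily close to $x$. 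Since $y\notin C^s_\eps(x)$, the uniqueness part of Corollary \ref{arccon} and cw-expansiveness on surfaces (two distinct stable arcs cannot bound an open disc) prevent $C^s_\eps(y)$ from crossing any $s_i$; hence $C^s_\eps(y)$ can only leave the sector $W_1$ through one of the unstable arcs $u_j$ bounding $W_1$.

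The key topological step, which is where the hypothesis $P^s(x)\geq 3$ is used, is to show that the stable continuum of some nearby point must cross the three unstable arcs $u_1,u_2,u_3$ repeatedly. Applying the cw-local-product-structure, for each of the three sectors $W_{2k-1}$ I can select $y_k\in W_{2k-1}\cap B_{\delta'/2}(x)$ with $C^u_{\eps'}(y_k)\cap C^s_{\eps'}(x)\ni p_k$, where $p_k$ lies on one of the stable separatrices $s_{i(k)}$. Translating this datum through the arc $s(x,p_k;x)$ and the uniqueness of such arcs, I can invert the roles of the two foliations to produce a single local continuum, say $C^u_\eps(x)$, which meets some $C^s_{\eps'}(\cdot)$ in the three distinct sector-boundary points $y_1,y_2,y_3$. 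Running the same argument for a sequence of scales $r_n\to 0$ (and using the self-similar alternating structure at $x$ at each scale, via Lemma \ref{epsigual2eps}) multiplies the number of intersection points past every finite bound, contradicting cw$_F$-expansiveness.

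The main obstacle is precisely this last step: converting the combinatorial "three alternating prongs" picture at $x$ into an actual unbounded count of intersections between one fixed pair of local stable and local unstable continua. Hiraide's original arguments provide such a count for expansive surface homeomorphisms by exploiting uniform contraction/expansion; here the technical difficulty is that we only have the weaker cw-local-product-structure, so the topological forcing must do all the work. The expected way around this is to combine (i) the non-empty-interior corollary (Corollary \ref{interior}) to find witnesses $y$ freely, (ii) the arc-uniqueness in Corollary \ref{arccon} to prevent $C^s_\eps(y)$ from intersecting $s_i$ tangentially without piercing it, and (iii) the alternation of Lemma \ref{alternado} at every scale to guarantee that the count strictly grows as $y\to x$.
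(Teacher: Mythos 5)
Your setup is right, and in fact it is the same setup the paper uses: three stable separatrices $s_1,s_2,s_3$ cut a small ball into three stable sectors $A_1,A_2,A_3$; Lemma~\ref{alternado} puts one unstable separatrix $u_i$ in each $A_i$; and Corollary~\ref{interior} lets you find witnesses $a$ off $C^s_\eps(x)$ whose stable continuum is trapped inside a single sector. But from here you take the wrong turn. You aim to iterate the picture at smaller and smaller scales to accumulate more and more intersections between one fixed pair of local stable/unstable continua, contradicting cw$_F$-expansiveness. You yourself flag this as the ``main obstacle'' and you are right to: it is not clear how to carry this out, and the paper does not attempt it.

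The much shorter punchline the paper uses is a disjointness argument that contradicts the cw-local-product-structure directly, not cw$_F$-expansiveness. Once you have the three stable sectors $A_1,A_2,A_3$ and, inside a smaller ball $B_{r_0}(x)$, three unstable sectors $B_1,B_2,B_3$ cut out by $u_1,u_2,u_3$, the alternation forces some pair $A_i\cap B_j=\emptyset$ (with six alternating rays, the ``opposite'' stable and unstable sectors do not meet near $x$). Now fix $\delta_{r_0}$ from the cw-local-product-structure at scale $r_0/4$ and use Corollary~\ref{interior} to pick $a\in A_i\setminus C^s_\eps(x)$ and $b\in B_j\setminus C^u_\eps(x)$ with $d(a,x),d(b,x)<\delta_{r_0}/2$. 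Because $a\notin C^s_\eps(x)$, the continuum $C^s_{r_0/4}(a)$ cannot cross the stable separatrices bounding $A_i$, hence $C^s_{r_0/4}(a)\subset A_i$; likewise $C^u_{r_0/4}(b)\subset B_j$. Since $A_i\cap B_j=\emptyset$, these two continua cannot meet even though $d(a,b)<\delta_{r_0}$, contradicting the local product structure. No iteration, no counting of intersections, and no appeal to the finiteness in cw$_F$ is needed at this point. (Note also that the $r_0$ you take should be made small relative to the diameters of the $u(x,y_i;x)$ and to $\operatorname{dist}(x,(C^s_\eps(x)\cap C^u_\eps(x))\setminus\{x\})$, exactly as in Lemma~\ref{alternado}, to ensure the unstable arcs do not return and $C^s_\eps(x)\cap C^u_\eps(x)$ contributes no spurious crossings in $B_{r_0}(x)$.)
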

    \begin{proof}
        Suppose, by contradiction, that there exists $x\in S$ with $P^{\sigma}(x)\geq 3$.
        Let $r>0$ and $x_1, x_2, x_3$ be in different classes of $C_\eps^s(x)$ such that $s(x,x_i;x) \subset B_r(x)$ and intersects $\partial B_r(x)$ only at $x_i$ for every $i=1,2,3$. Then $\bigcup_{i=1}^3 s(x,x_i;x)$ separates $B_r(x)$ in exactly three components, $A_1, A_2, A_3$ as in Figure \ref{fig:arco1}.
        
    \begin{figure}[H]
        \centering
        \includegraphics[width=0.3\textwidth]{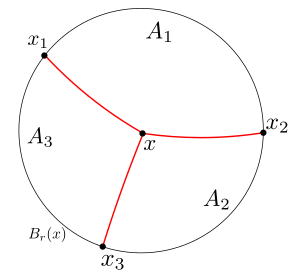}
        \caption{}
        \label{fig:arco1}
    \end{figure}
Using Lemma \ref{alternado}, we can find $y_i\in A_i$ such that $u(x,y_i;x) \subset A_i$. 
If $C^s_{\eps}(x)\cap C^u_\eps(x)=\{x\}$, let $$r_0 = \min(\diam u(x,y_i;x))$$ otherwise, let $$r_0=\min(\dist(x,(C^{s}_{\eps}(x)\cap C^{u}_{\eps}(x))\setminus\{x\}), \min(\diam u(x,y_i;x))),$$ which is a positive number by $cw_F$-expansiveness.
        It follows that $\bigcup_{i=1}^3 u(x,z_i;x)$ divides $B_{r_0}(x)$ in three components, $B_1,B_2,B_3$, for some $z_i\in u(x,y_i;x)$ (see Figure \ref{fig:arco2}). 

    \begin{figure}[H]
        \centering
        \includegraphics[width=0.3\textwidth]{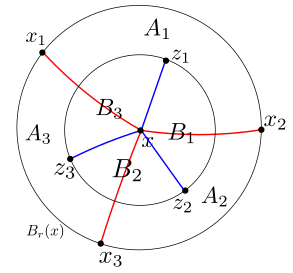}
        \caption{}
        \label{fig:arco2}
    \end{figure}
        Since the stable and unstable arcs are alternating, then $$A_i\cap B_j=\emptyset.$$ for some $i,j\in \{1,2,3\}$ (see Figure \ref{fig:arco3}).

    \begin{figure}[H]
        \centering
        \includegraphics[width=0.3\textwidth]{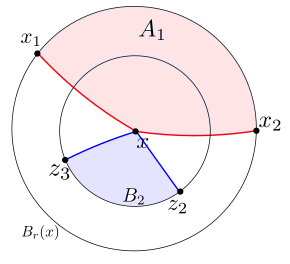}
        \caption{}
        \label{fig:arco3}
    \end{figure}
Choose $\delta_{r_0}\in (0,\frac{r_0}{4})$, given by cw-local-product-structure, such that $$C^s_\frac{r_0}{4}(x)\cap C^u_\frac{r_0}{4}(y)\neq\emptyset \,\,\, \tn{  whenever  } \,\,\, d(x,y)<\delta_{r_0}.$$
    Lemma \ref{interior} assures the existence of $$a\in A_i\setminus C^s_\eps(x) \,\,\,\,\,\, \text{with} \,\,\,\,\,\, \dist(a,x)<\frac{\delta_{r_0}}{2}$$ and $$b\in B_j\setminus C^u_\eps(x) \,\,\,\,\,\, \text{with} \,\,\,\,\,\, \dist(b,x)<\frac{\delta_{r_0}}{2}.$$      
    It follows that $$C^s_{\frac{r_0}{4}}(a)\subset A_i \,\,\,\,\, \text{and} \,\,\,\,\, C^u_{\frac{r_0}{4}}(b)\subset B_j,$$ and, hence, $C^s_{\frac{r_0}{4}}(a)\cap C^u_{\frac{r_0}{4}}(b)=\emptyset$ (see Figure \ref{fig:arco4}). 
    \begin{figure}[H]
        \centering
        \includegraphics[width=0.3\textwidth]{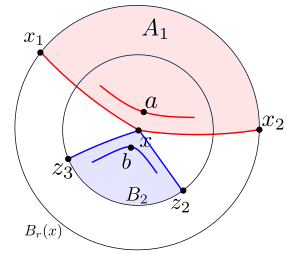}
        \caption{}
        \label{fig:arco4}
    \end{figure}    
        This contradicts the choice of $\delta_{r_0}$ and finishes the proof.
    \end{proof}
    
    There are two possible cases: either $P^{\sigma}(x)=1$ and $x$ is said to be a spine, or $P^{\sigma}(x)=2$ and $x$ is said to be a regular point. Let $\espinha(f)$ denote the set of all spines of $f$. The following proposition gathers all results we obtained so far. We prove that $C^\sigma_\eps(x)$ is an arc for every $x\in S$. 
    
    \begin{proposition}\label{arcs}
        If $x\in\espinha(f)$, then there is a homeomorphism $h^{\sigma}:[0,1]\to C^\sigma_\eps(x)$ with $h^{\sigma}(0)=x$. Otherwise, there is a homeomorphism $h^{\sigma}:[-1,1]\to C^\sigma_\eps(x)$ with $h^{\sigma}(0)=x$.
    \end{proposition}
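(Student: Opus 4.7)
The plan is to combine the already established properties of $C^{\sigma}_\eps(x)$---that it is a non-degenerate Peano continuum in which any two points are joined by a \emph{unique} arc and in which every point has at most two separatrices---to conclude that $C^{\sigma}_\eps(x)$ is a dendrite without branch points, hence an arc, and then to read off the position of $x$ from $P^{\sigma}(x)$.

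First I would observe that $C^{\sigma}_\eps(x)$ contains no simple closed curve: any simple closed curve $\gamma \subset C^{\sigma}_\eps(x)$ would provide two distinct sub-arcs between any two of its points, contradicting the uniqueness asserted in Corollary \ref{arccon}. Together with the local arcwise connectedness of $C^{\sigma}_\eps(x)$, this shows that $C^{\sigma}_\eps(x)$ is a dendrite.

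Next I would transfer Lemma \ref{1or2} to every $y \in C^{\sigma}_\eps(x)$. For $z \in C^{\sigma}_\eps(x) \cap B_{\delta_y}(y)$ with $\delta_y$ small, the triangle inequality yields $z \in W^{\sigma}_{2\eps}(y)$; by the local connectedness of $C^{\sigma}_\eps(x)$ at $y$ (Lemma \ref{conexidadelocal}), the connected component of $y$ in $C^{\sigma}_\eps(x) \cap B_{\delta_y}(y)$ is a connected subset of $W^{\sigma}_{2\eps}(y)$ containing $y$, hence lies in $C^{\sigma}_{2\eps}(y)$, and Lemma \ref{epsigual2eps} gives
\[
C^{\sigma}_\eps(x) \cap B_{\delta_y}(y) \subset C^{\sigma}_{2\eps}(y) \cap B_{\delta_y}(y) = C^{\sigma}_\eps(y) \cap B_{\delta_y}(y).
\]
Consequently every arc in $C^{\sigma}_\eps(x)$ emanating from $y$ coincides near $y$ with an arc of $C^{\sigma}_\eps(y)$, and two such arcs lying in the same separatrix class of $y$ in $C^{\sigma}_\eps(y)$ must share a common initial sub-arc by the uniqueness of arcs in $C^{\sigma}_\eps(y)$ (Corollary \ref{arccon} applied at $y$). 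It follows that the number of locally distinct arcs of $C^{\sigma}_\eps(x)$ emanating from $y$ is bounded by $P^{\sigma}(y) \leq 2$, and therefore $C^{\sigma}_\eps(x)$ has no branch point.

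A non-degenerate dendrite in which every point has order at most two is homeomorphic to $[0,1]$; non-degeneracy of $C^{\sigma}_\eps(x)$ follows from Lemma \ref{tamanho uniforme}, so $C^{\sigma}_\eps(x)$ is an arc. By the definition of a spine, $P^{\sigma}(x) = 1$ when $x \in \espinha(f)$, so $x$ is an endpoint of this arc and we obtain a homeomorphism $h^{\sigma} : [0,1] \to C^{\sigma}_\eps(x)$ with $h^{\sigma}(0) = x$; otherwise $P^{\sigma}(x) = 2$, so $x$ is an interior point and we get $h^{\sigma} : [-1,1] \to C^{\sigma}_\eps(x)$ with $h^{\sigma}(0) = x$. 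The main obstacle in this plan is the local inclusion in the third paragraph: one must choose $\delta_y$ small enough for the component-through-$y$ argument to fit inside $C^{\sigma}_{2\eps}(y)$, and then carefully convert ``at most two separatrix classes at $y$'' into ``at most two locally distinct arcs of $C^{\sigma}_\eps(x)$ at $y$'' via the uniqueness of arcs; once this bookkeeping is done, the reduction to the classical dendrite classification is immediate.
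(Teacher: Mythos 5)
Your proof is correct and rests on the same essential ingredients as the paper's (Corollary~\ref{arccon}, Lemma~\ref{1or2}, and Lemma~\ref{epsigual2eps}), but it routes through a different classical endpoint. The paper follows Hiraide's scheme: it decomposes $C^\sigma_\eps(x)$ into the union $IC^\sigma(x)$ of open arcs and the complementary set $BC^\sigma(x)$, shows $\#BC^\sigma(x)\geq 2$ by citing Hiraide's Lemma~4.5, and shows $\#BC^\sigma(x)\leq 2$ by the observation that a third boundary point would force a $y$ with $P^\sigma(y)\geq 3$, contradicting Lemma~\ref{1or2}; the arc between the two boundary points then exhausts the continuum. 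You instead observe that uniqueness of connecting arcs rules out simple closed curves, so $C^\sigma_\eps(x)$ is a dendrite, and then use Lemma~\ref{1or2} to bound the order of every point by $2$, invoking the classification of non-degenerate dendrites of order $\leq 2$ as arcs. The two routes are close cousins; what your version buys is self-containment (no appeal to Hiraide's lemma to get $\#BC^\sigma\geq2$) and, more importantly, it makes explicit the transfer step that the paper leaves implicit: the claim that a branch point of $C^\sigma_\eps(x)$ at $y$ would give $P^\sigma(y)\geq 3$ requires knowing that $C^\sigma_\eps(x)$ and $C^\sigma_\eps(y)$ agree in a small ball around $y$, which you derive cleanly from the triangle inequality and Lemma~\ref{epsigual2eps}. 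The final reading-off of $h^\sigma(0)=x$ from $P^\sigma(x)\in\{1,2\}$ is identical to the paper's.
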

    
    \begin{proof}
    Let $IC^{\sigma}(x)$ denote the union of all open arcs in $C^{\sigma}_{\eps}(x)$ and $$BC^{\sigma}(x)=C^{\sigma}_{\eps}(x)\setminus IC^{\sigma}(x).$$ Note that $BC^{\sigma}(x)$ is always formed by two distinct points $x_1$ and $x_2$, since local connectedness of $C^{\sigma}_{\eps}(x)$ ensures the existence of at least two points in $BC^{\sigma}(x)$ (as in Lemma 4.5 of \cite{Hiraide1987ExpansiveHO}), and the existence of three distinct points in $BC^{\sigma}(x)$ would imply the existence of $y\in C^{\sigma}_{\eps}(x)$ with $P^{\sigma}(y)\geq3$, contradicting Lemma \ref{1or2}. If $x\in \espinha(f)$, then either $x=x_1$ or $x=x_2$, and the arc connecting $x_1$ to $x_2$ gives us a homeomorphism $h^{\sigma}\colon [0,1]\to C^{\sigma}_{\eps}(x)$ such that $h^{\sigma}(0)=x$. If $x\notin\espinha(f)$, then $x\notin BC^{\sigma}(x)$ and the arc connecting $x_1$ to $x_2$ gives us a homeomorphism $h^{\sigma}:[-1,1]\to C^\sigma_\eps(x)$ with $h^{\sigma}(0)=x$.
    \end{proof}
    
    Now we exhibit two important consequences of above results that will be important in the proofs of Section 3. In the first lemma, we prove that either a local stable/unstable continuum separates a small ball, or it contains a spine in this ball. The notation $c.c_x (A)$ is used to denote the connected component of $x$ in the set $A$. 

    \begin{lemma}\label{separadelta}
        For each $0<\eps<\frac{\eps^*}{4}$, there exists $\delta\in(0,\eps)$ such that for each $x\in S$ one of the following holds:
    \begin{enumerate}
        \item $c.c_x (C^\sigma_\eps(x)\cap B_\delta(x))$ separates $B_\delta(x)$,
        \item $c.c_x (C^\sigma_\eps(x)\cap B_\delta(x))$ contains a spine.
    \end{enumerate}
    \end{lemma}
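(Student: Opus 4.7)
Given $\eps \in (0, \eps^*/4)$, the strategy is to combine the arc structure of $C^\sigma_\eps(x)$ from Proposition \ref{arcs} with the doubling lemma \ref{epsigual2eps}. Writing $K := c.c_x(C^\sigma_\eps(x) \cap B_\delta(x))$, the identification of $C^\sigma_\eps(x)$ with an arc shows that $K$ is a sub-arc of $C^\sigma_\eps(x)$ containing $x$. The plan is to show that if $K$ does not separate $B_\delta(x)$, then $K$ must reach a topological endpoint of $C^\sigma_\eps(x)$ sitting inside the open ball $B_\delta(x)$, and that any such endpoint is forced to be a spine.

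To fix constants, I would apply Lemma \ref{epsigual2eps} at parameter $\eps$ to produce a uniform $\delta_0 \in (0, \eps)$ with $C^\sigma_\eps(y) \cap B_{\delta_0}(y) = C^\sigma_{2\eps}(y) \cap B_{\delta_0}(y)$ for every $y \in S$, and then take any $\delta \in (0, \delta_0)$. If $x \in \espinha(f)$ then $x \in K$ and alternative (2) holds trivially. If $x$ is regular and both topological endpoints of $K$ lie on $\partial B_\delta(x)$, then $K$ is an arc crossing the disk $B_\delta(x)$ from boundary to boundary through $x$ and hence separates it, yielding (1). In the remaining case, some topological endpoint $p$ of $K$ lies in the interior of $B_\delta(x)$, and by maximality of $K$ the point $p$ must coincide with a topological endpoint of the arc $C^\sigma_\eps(x)$.

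The heart of the argument is to show $p \in \espinha(f)$. Suppose not; then $C^\sigma_\eps(p)$ is an arc having $p$ as an interior point, and I would choose a sub-arc $\gamma_p \subset C^\sigma_\eps(p)$ containing $p$ in its interior with $\diam \gamma_p < \delta - d(x,p) > 0$, so that $\gamma_p \subset B_\delta(x) \subset B_{\delta_0}(x)$. For any $y \in \gamma_p$, triangle inequality applied to $y \in W^\sigma_\eps(p)$ and $p \in W^\sigma_\eps(x)$ gives $y \in W^\sigma_{2\eps}(x)$; hence $\gamma_p \cup K$ is connected, contains $x$, and lies in $W^\sigma_{2\eps}(x)$, which forces $\gamma_p \subset C^\sigma_{2\eps}(x) \cap B_{\delta_0}(x) = C^\sigma_\eps(x) \cap B_{\delta_0}(x)$ by the choice of $\delta_0$. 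Thus $\gamma_p \cup K$ is a connected subset of $C^\sigma_\eps(x) \cap B_\delta(x)$ containing $x$, and by maximality $\gamma_p \subset K$. This is incompatible with the arc structure: $\gamma_p$ has $p$ in its interior while $K$ has $p$ as a topological endpoint, and by the uniqueness of arcs in $C^\sigma_\eps(x)$ (Corollary \ref{arccon}) an arc having $p$ as interior point cannot sit inside a sub-arc of $C^\sigma_\eps(x)$ ending at $p$.

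The main obstacle is the parameter bookkeeping. One needs $\gamma_p$ to be close enough to $x$ that Lemma \ref{epsigual2eps} can absorb the doubling coming from the triangle inequality ($\eps + \eps = 2\eps$), while simultaneously keeping $\gamma_p$ inside $B_\delta(x)$. The choice $\delta < \delta_0$ together with a sufficiently short sub-arc $\gamma_p$ reconciles both demands, and no new dynamical input beyond the results already established in Section 2 is required.
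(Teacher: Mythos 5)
Your proposal is correct and follows essentially the same route as the paper's proof: fix $\delta$ via Lemma \ref{epsigual2eps}, use the arc structure from Proposition \ref{arcs}, and show that when $c.c_x(C^\sigma_\eps(x)\cap B_\delta(x))$ does not separate $B_\delta(x)$, an endpoint of the arc $C^\sigma_\eps(x)$ is trapped inside the ball and the doubling $C^\sigma_{2\eps}(x)\cap B_\delta(x)=C^\sigma_\eps(x)\cap B_\delta(x)$ forces that endpoint to have a single separatrix. The only superficial differences are that the paper invokes Lemma \ref{tamanho uniforme} to push one side of the arc onto $\partial B_\delta(x)$ and then concludes $P(y)=1$ tersely, whereas you phrase the case split directly on the endpoints of $K$ and make the final contradiction explicit through the auxiliary sub-arc $\gamma_p$ — a slightly more detailed rendering of the same argument.
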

    \begin{proof}
    If $x\in\espinha$, we are in case (2). Then we assume that     $x\in S\setminus \espinha(f)$. For each $0<\eps<\frac{\eps^*}{4}$, let $\delta\in(0,\eps)$ be given by Lemma \ref{epsigual2eps} such that $$C^\sigma_\eps(x)\cap B_\delta(x) = C^\sigma_{2\eps}(x)\cap B_\delta(x).$$
Let $h^{\sigma}:[-1,1]\to C^\sigma_\eps(x)$ be a homeomorphism as in Proposition \ref{arcs} with $h^{\sigma}(0)=x$. Lemma \ref{tamanho uniforme} ensures the existence of $z\in C^\sigma_\eps(x)\cap \partial B_\delta(x)$. Without loss of generality, we can assume that $z\sim h(-1)$. We assume item (1) is false and prove item (2). Suppose that $c.c_x (C^\sigma_\eps(x)\cap B_\delta(x))$ does not separate $B_\delta(x)$. Since $z\in C^\sigma_\eps(x)\cap \partial B_\delta(x)$ and $z\sim h(-1)$, then $\sigma(x,y;x) \subset \inte B_\delta(x)$, where $y=h(1)$. Since $y\in C^\sigma_\eps(x)$, then $C^\sigma_\eps(y)\subset C^\sigma_{2\eps}(x)$ and, hence, $$C^\sigma_\eps(y)\cap B_{\delta}(x)\subset C^\sigma_{2\eps}(x)\cap B_{\delta}(x)=C^\sigma_\eps(x)\cap B_\delta(x).$$ Thus, $y\in c.c_x (C^\sigma_\eps(x)\cap B_\delta(x))$ and $P(y)=1$, that is, $y\in\espinha(f)$.

    \end{proof}
    
    In the last result of this section, we observe that local stable/unstable sets intersect transversely. First, we state a precise definition for topological transversality.
    
    \begin{definition}
        Let $\alpha, \beta$ be arcs in $S$ meeting at $x$. We say that $\alpha$ is topologically transversal to $\beta$ at $x$ if there exists a disk $D$ such that
        \begin{enumerate}
            \item $\alpha \cap \beta \cap D=\{x\}$,
            \item $\beta$ separates $D$, and
            \item the connected components of $(\alpha\setminus \beta)\cap D$ are in different components of $D\setminus \beta$ (See Fig.\ \ref{fig:transversearcs}).
        \end{enumerate} 
    \end{definition}

    \begin{figure}[H]
        \centering
        \includegraphics[width=0.23\textwidth]{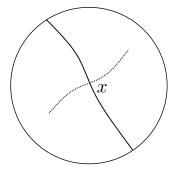}
        \caption{Two arcs topologically transverse at the point $x$.}
        \label{fig:transversearcs}
    \end{figure}

    \begin{lemma}\label{transversal}
        If $x,y\in S$ and $z\in C^s_\eps(x)\cap C^u_\eps(y)\setminus (Spin(f))$, then $C^s_\eps(x)$ intersects $C^u_\eps(y)$ transversely at $z$.
    \end{lemma}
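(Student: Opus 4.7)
The strategy is to reduce the problem to the intrinsic local stable and unstable continua at the point $z$, and then invoke the alternation result (Lemma \ref{alternado}) to verify the three conditions of topological transversality on a small disk around $z$.

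Since $z\notin\espinha(f)$, Lemma \ref{1or2} and Corollary \ref{separatrices} give $P^s(z)=P^u(z)=2$, so by Proposition \ref{arcs} both $C^s_\eps(z)$ and $C^u_\eps(z)$ are arcs with $z$ as an interior point. I first show that near $z$ the continuum $C^s_\eps(x)$ is contained in $C^s_\eps(z)$. Since $z\in W^s_\eps(x)$, the triangle inequality gives that any $w\in C^s_\eps(x)$ lies in $W^s_{2\eps}(z)$, and since $C^s_\eps(x)$ is connected and contains $z$, we obtain $C^s_\eps(x)\subset C^s_{2\eps}(z)$. Choosing $\eps<\eps^*/8$ and applying Lemma \ref{epsigual2eps} at $z$ produces $\delta>0$ with $C^s_{2\eps}(z)\cap B_\delta(z)=C^s_\eps(z)\cap B_\delta(z)$, hence $C^s_\eps(x)\cap B_\delta(z)\subset C^s_\eps(z)$. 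The symmetric argument applies to $C^u_\eps(y)$ and $C^u_\eps(z)$.

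Next I apply Lemma \ref{alternado} at $z$: for sufficiently small $r>0$, the two stable separatrices of $z$ cut $B_r(z)$ into components $A_1,A_2$, and each component contains an unstable separatrix of $z$. Using $cw_F$-expansiveness, since $C^s_\eps(x)\cap C^u_\eps(y)\subset C^s_{2\eps}(z)\cap C^u_{2\eps}(z)$ is finite, I can further shrink $r$ so that this intersection meets $B_r(z)$ only at $z$. Set $D=B_r(z)$ and let $\beta$ and $\alpha$ be the subarcs of $C^s_\eps(x)$ and $C^u_\eps(y)$ through $z$ inside $D$. By the first paragraph, these coincide with the corresponding pieces of $C^s_\eps(z)\cap D$ and $C^u_\eps(z)\cap D$. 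Condition (1) of transversality holds by the choice of $r$; condition (2) holds because $\beta$ is an arc through $z$ reaching $\partial D$ on both sides, hence separates $D$; and condition (3) follows from Lemma \ref{alternado}, since the two components of $\alpha\setminus\{z\}$ lie in the distinct components $A_1,A_2$ of $D\setminus\beta$.

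The main subtlety I expect is the potential endpoint case: if $z$ happens to lie in $BC^s(x)$ or $BC^u(y)$, then the relevant portion of $C^s_\eps(x)$ (or $C^u_\eps(y)$) inside $D$ is only a half-arc ending at $z$, rather than a full arc separating $D$. Resolving this requires either interpreting the definition of transversality flexibly—allowing $\beta$ to be a subarc of the larger arc $C^s_\eps(z)$ that extends $C^s_\eps(x)$ on the missing side—or invoking the local coincidence above to argue that the non-spine condition at $z$ already forces $z$ to be interior to $C^s_\eps(x)$ (and symmetrically for $C^u_\eps(y)$), so that both subarcs through $z$ inside $D$ are genuinely two-sided.
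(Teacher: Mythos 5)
Your proof is correct and takes essentially the same approach as the paper: reduce to the intrinsic arcs $C^s_{2\eps}(z)$ and $C^u_\eps(z)$, which are two-sided since $P^s(z)=P^u(z)=2$, and apply Lemma \ref{alternado} to force the alternation that gives transversality. The paper phrases the alternation step as a contradiction rather than verifying the three conditions directly, and it implicitly adopts the same resolution of the endpoint subtlety you flag—working with the arcs centered at $z$ rather than with $C^s_\eps(x)$ and $C^u_\eps(y)$ themselves, which might terminate at $z$.
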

    \begin{proof}
        Since $z\notin Spin(f)$, then $P^s(z)=P^u(z)=2$. If the intersection is not transversal we find $a_1,a_2\in C^s_{2\eps}(z)$ and a disk $D$ around $z$ such that $s(a_1,a_2;z)$ separates $D$ and there is a component of $D\setminus s(a_1,a_2;z)$ containing $b_1\not\sim b_2\in C^u_\eps(z)$. This is a contradiction with \ref{alternado} because $P_\eps(z)=2$.
    \end{proof}

     \vspace{+0.4cm}

    \section{Bi-asymptotic sectors and spines}
    
    Bi-asymptotic sectors were introduced in \cite{Artigue2013NexpansiveHO} for $N$-expansive homeomorphisms on surfaces. These sectors were defined as being a disk bounded by the union of a local stable and a local unstable arc (see Figure \ref{fig:setorantigo}). In the case of 2-expansive homeomorphisms, a consequence of the arguments of \cite{Artigue2013NexpansiveHO} is that both intersections $a_1,a_2$ of a bi-asymptotic sector are not only transversal, but point outside the disk (see Figure \ref{fig:setorantigo}). Indeed, 2-expansiveness and non-existence of wandering points imply the non-existence of spines inside by-asymptotic sectors (see Proposition 3.5 in \cite{Artigue2013NexpansiveHO}), and this ensure the existence of a third intersection between the stable and unstable arcs bounding the sector if the intersection points inward.

    \begin{figure}[H]
        \begin{subfigure}[h]{0.49\linewidth}
            \centering
            \includegraphics[width=0.76\textwidth]{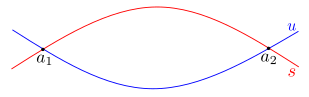}
            \caption{}
            \label{fig:setorantigo}
        \end{subfigure}
        \hfill        
        \begin{subfigure}[h]{0.49\linewidth}
            \centering
            \includegraphics[width=0.76\textwidth]{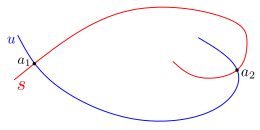}
            \caption{}
            \label{fig:setorbugado1}
        \end{subfigure}%
        \caption{}
    \end{figure}

    For cw$_F$-hyperbolic homeomorphisms, it is not possible to ensure the intersections points outward the disk. First, because there will be spines inside the sector, but also because this case allows more intersections between local stable/unstable arcs.
    The goal is to prove that every bi-asymptotic sector contains a spine and that every spine is contained in a bi-asymptotic sector. To prove this, we first understand the case of bi-asymptotic sectors with intersections pointing outward the sector (these sectors will be called regular). We will characterize the structure of stable/unstable arcs inside a regular bi-asymptotic sector obtaining a single spine inside it. 
    
    \begin{definition}\label{bidefinition}
    We say that $C^s_\eps(x)$ and $C^u_\eps(x)$ form a bi-asymptotic sector if there exists a pair of sub-arcs $a^s$, $a^u$ contained in $C^s_\eps(x)$ and $C^u_\eps(x)$, respectively, such that $a^s\cup a^u$ bounds a disk $D$. In this case, $D$ is called the bi-asymptotic sector. Let $a_1$ and $a_2$ be the end points of $a^s$ and $a^u$.
    A bi-asymptotic sector $D$ is said to be regular if it satisfies the following:
    \vspace{+0.2cm}
    
        (Regularity condition) There exists neighborhoods $V_{a_1}$ of $a_1$ and $V_{a_2}$ of $a_2$ such that $C^{\sigma}_\eps(x)\cap V_{a_1}\cap\inte D=\emptyset$ and $C^{\sigma}_\eps(x)\cap V_{a_2}\cap\inte D=\emptyset$.
    \end{definition}

Without the regularity condition, the bi-asymptotic sectors can contain more than one spine, and, hence, a more complicated structure of stable/unstable arcs, as in Figure \ref{fig:setorbugado21}, \ref{fig:setorbugado22} and \ref{fig:setorbugado33}.
    
    \begin{figure}[H]
        \begin{subfigure}[h]{0.4\linewidth}
            \centering
            \includegraphics[width=0.7\textwidth]{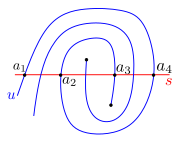}
            \caption{}
            \label{fig:setorbugado21}
        \end{subfigure}
        \hfill    
        \begin{subfigure}[h]{0.4\linewidth}
            \centering
            \includegraphics[width=0.7\textwidth]{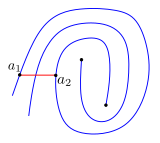}
            \caption{}
            \label{fig:setorbugado22}
        \end{subfigure}
        \begin{subfigure}[h]{0.4\linewidth}
            \centering
            \includegraphics[width=0.7\textwidth]{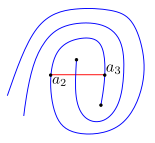}
            \caption{}
            \label{fig:setorbugado33}
        \end{subfigure}
        \hfill
        \begin{subfigure}[h]{0.4\linewidth}
            \centering
            \includegraphics[width=0.7\textwidth]{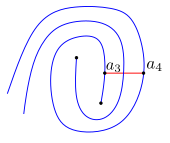}
            \caption{}
            \label{fig:setorbugado44}
        \end{subfigure}
        \caption{}
    \end{figure}   

    Note that both the stable and unstable continua enter the disk passing through $a_2$, that is, for every neighborhood $V$ of $a_2$ we have $C^\sigma_\eps(a_2)\cap V\cap\inte D\neq \emptyset$. Thus, the sector in Figure \ref{fig:setorbugado22} formed considering the stable arc from $a_1$ and $a_2$ does not satisfy the regularity condition. The same happens with the sector bounded by the stable and unstable arcs connecting $a_3$ and $a_4$. Also, note that the sectors formed by the stable arc from $a_1$ to $a_4$ and from $a_2$ to $a_3$ satisfy the regularity condition. Inside these sectors, the structure of stable/unstable arcs is the same: there is a single spine and all stable/unstable arcs turn around this spine. We prove in this section that this is exactly the structure of stable/unstable arcs inside any regular bi-asymptotic sector. 
    Let $D$ be a regular bi-asymptotic sector bounded by $a^s$ and $a^u$ with $\diam D<\delta$ (given by Lemma \ref{separadelta}), and let $a_1$ and $a_2$ be the end points of $a^s$ and $a^u$. For $p\in D$, define $C^u_D(p)$ and $C^s_D(p)$ as the connected component of $C^u(p)\cap D$ and $C^s(p)\cap D$ containing $p$ respectively. We remark that Lemma \ref{separadelta} also holds changing the ball $B_{\delta}(x)$ for the sector $D$, that is, for each $p\in D$, either $C^{\sigma}_D(p)$ separates $D$ or $C^{\sigma}_D(p)$ contains a spine. The hypothesis of regularity is important to ensure the following result.

    \begin{lemma}\label{as}
    $C^{\sigma}_D(p)=a^{\sigma}$ for every $p\in a^{\sigma}$.
    \end{lemma}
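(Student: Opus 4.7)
The plan is to prove both inclusions $a^\sigma\subseteq C^\sigma_D(p)$ and $C^\sigma_D(p)\subseteq a^\sigma$ separately, and since the argument is symmetric in $\sigma$ I will write it for $\sigma=s$. The first inclusion is the easy direction: because $a^s\subseteq C^s_\eps(x)$ and $\eps<\eps^*$, Lemma \ref{cwexpansivecarac} gives $a^s\in\mathcal{C}^s$, so every point of $a^s$ has its forward orbit converging to that of $p$, and in particular $a^s\subseteq C^s(p)$. Since $a^s$ is connected, lies inside $D$, and contains $p$, it is contained in the connected component $C^s_D(p)$.

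For the reverse inclusion I would exploit the regularity hypothesis together with the arc structure of $C^s_\eps(p)$ coming from Proposition \ref{arcs}. The key preliminary step is to transport regularity, which is phrased only for $C^s_\eps(x)$, over to $C^s_\eps(p)$. Using that $\diam D<\delta$ and shrinking the regularity neighbourhoods so that $V_{a_i}\subseteq B_\delta(a_i)$, I would apply Lemma \ref{epsigual2eps} at each $a_i$ to identify the three local continua $C^s_\eps(x)$, $C^s_\eps(a_i)$ and $C^s_\eps(p)$ on the ball $B_\delta(a_i)$, combined with the symmetric relation $p\in C^s_\eps(a_i)\Leftrightarrow a_i\in C^s_\eps(p)$ (valid up to the usual $\eps$-vs-$2\eps$ discrepancy). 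The outcome is $C^s_\eps(p)\cap V_{a_i}\cap\inte D=\emptyset$ for $i=1,2$.

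With regularity transferred, I would view $C^s_\eps(p)$ as a parametrised arc containing $a^s$ as the sub-arc whose endpoints are $a_1,a_2\in\partial D$. The statement just derived says that the continuation of $C^s_\eps(p)$ beyond $a_i$ leaves $\inte D$ and does not re-enter inside $V_{a_i}$, so the parametrisation exhibits a genuine gap in $C^s_\eps(p)\cap D$ immediately outside $a^s$. Any further intersections of $C^s_\eps(p)$ with $D$ are therefore separated from $a^s$ and belong to connected components of $C^s_\eps(p)\cap D$ distinct from the one containing $p$. Hence the component of $p$ is exactly $a^s$, which combined with the first inclusion yields $C^s_D(p)=a^s$.

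The step I expect to be the main obstacle is the transfer of regularity in the second paragraph. The hypothesis in Definition \ref{bidefinition} is formulated only in terms of $C^s_\eps(x)$ at the distinguished vertex $x$, whereas for a general $p\in a^s$ we need exactly the same non-entry behaviour near $a_1$ and $a_2$ for the local arc through $p$. Carrying out the identification carefully via Lemma \ref{epsigual2eps}, keeping track of which local continuum is being compared in which ball and at what radius, is the technical heart of the argument; once it is in place the topological conclusion is almost immediate.
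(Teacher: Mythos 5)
Your proof is correct and follows essentially the same route as the paper's. Where the paper argues directly---taking $y\in C^\sigma_D(p)\setminus a^\sigma$ and observing that the sub-arc $\sigma(a_i,y;x)$ of $C^\sigma_\eps(x)$ must enter $\inte D$ through a neighbourhood of $a_i$, contradicting the regularity condition at $x$---you instead first transfer regularity to $C^\sigma_\eps(p)$ and then conclude; both moves rest on the same identification (small diameter of $D$ together with Lemma~\ref{epsigual2eps}), so you have correctly located the single spot where a real check is needed, and the only cosmetic difference is whether the final contradiction is phrased against $C^\sigma_\eps(x)$ or against $C^\sigma_\eps(p)$.
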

    \begin{proof}
        It is clear that $C^{\sigma}_D(p)\supset a^{\sigma}$. 
        By contradiction, assume that there exist $p\in a^{\sigma}$ and $y\in C^{\sigma}_D(p)\setminus a^{\sigma}$. This means that either $\sigma(a_1,y;x)$ or $\sigma(a_2,y;x)$ is contained in $\inte D$ and contradicts the regularity condition.
    \end{proof}
    Note that in Figure \ref{fig:setorbugado22}, the arc $a^u$ connects $a_1$ to $a_2$, while $C^u_D(a_1)$ contains $a^u$ and also an arc from $a_2$ to a spine in the interior of $D$. Also, $a^s$ is the stable arc from $a_1$ to $a_2$, while $C^s_D(a_1)$ contains the arc connecting $a_2$ to $a_4$ (see Figure \ref{fig:setorbugado21}). The following lemma is a consequence of the previous lemma and the transversality explained in Lemma \ref{transversal}. The following results also hold changing the roles of $s$ and $u$ but we will not use $\sigma$ as before since it would make the presentation more complicated.

    \begin{lemma}\label{setorbonito}
        $1\leq \#(C^s_D(p)\cap a^u)\leq 2$ for every $p\in D$. If $\#(C^s_D(p)\cap a^u)=1$, then $C^s_D(p)$ contains a spine.
    \end{lemma}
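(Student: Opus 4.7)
The plan is to exploit the arc structure of $C^s_\eps(p)$ from Proposition \ref{arcs} and the separation dichotomy from the remark after Lemma \ref{separadelta}, combined with transversality (Lemma \ref{transversal}) and a local-coincidence property for stable arcs. Write $\alpha := C^s_D(p)$. By Proposition \ref{arcs}, $\alpha$ is a sub-arc of $C^s_\eps(p)$ contained in the disk $D$; Lemma \ref{tamanho uniforme} combined with $\diam D < \delta$ gives $C^s_\eps(p) \not\subset D$, so at least one endpoint of $\alpha$ lies on $\partial D = a^s \cup a^u$. Also, since a spine has a single $u$-separatrix, it must be an endpoint of any unstable arc through it; hence interior points of $a^u$ are regular.

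For the upper bound, I would argue that every $q \in \alpha \cap a^u$ must be an endpoint of $\alpha$. If $q \in a^u \setminus \{a_1, a_2\}$, Lemma \ref{transversal} says $C^s_\eps(p)$ and $C^u_\eps(x) \supset a^u$ meet transversally at $q$, and a transversal crossing at an interior point of $\alpha$ would force $\alpha$ to leave $D$ locally (since $a^u \subset \partial D$), contradicting $\alpha \subset D$. If $q = a_i$, the regularity condition prevents the stable separatrix at $a_i$ from entering $\inte D$, so $\alpha$ passing through $a_i$ must follow $a^s$ near $a_i$; propagating this alignment along $a^s$ shows that any interior passage of $\alpha$ through $a_i$ forces $\alpha \supset a^s$ and by Lemma \ref{as} gives $\alpha = a^s$ with $\#(\alpha \cap a^u) = 2$. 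Otherwise $a_i$ is an endpoint of $\alpha$. Since $\alpha$ has at most two endpoints, $\#(\alpha \cap a^u) \leq 2$.

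For the lower bound and the spine implication, I would first rule out that $\alpha$ has an endpoint on $\inte a^s$ unless $\alpha = a^s$. If $q \in \inte a^s \cap \alpha$, a local-coincidence argument (using Lemma \ref{1or2} that $P^s(q) = 2$, together with the cw-local-product-structure) forces $C^s_\eps(p)$ to agree with $C^s_\eps(x)$ on a neighborhood of $q$; iterating propagates this alignment along $a^s$ until reaching $a_1$ or $a_2$, giving $\alpha \supset a^s$ and hence $\alpha = a^s$ by Lemma \ref{as}. For $p \notin a^s$, all endpoints of $\alpha$ on $\partial D$ therefore lie on $a^u$ (with $a_1, a_2$ counted as $a^u$-points). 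Invoking the dichotomy from the remark after Lemma \ref{separadelta} with $D$ in place of $B_\delta(x)$: either $\alpha$ separates $D$, so both endpoints lie on $\partial D$ hence on $a^u$, giving $\#(\alpha \cap a^u) = 2$; or $\alpha$ contains a spine, in which case $\alpha$ still has at least one endpoint on $\partial D$ by the diameter bound, necessarily on $a^u$, giving $\#(\alpha \cap a^u) \geq 1$. In particular, if $\#(\alpha \cap a^u) = 1$ then $\alpha$ cannot separate $D$ (otherwise both endpoints would be on $a^u$), so by the dichotomy $\alpha$ contains a spine.

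The main obstacle I expect is making the local coincidence of stable arcs rigorous in the cw$_F$-hyperbolic setting: unlike in the purely expansive Lewowicz--Hiraide theory, two stable continua meeting at a regular point do not a priori coincide on a neighborhood, and deriving this will need careful combination of cw$_F$-expansiveness, the cw-local-product-structure, and the absence of bifurcation points from Lemma \ref{1or2}. The regularity condition on $D$ plays a crucial role throughout, especially in excluding pathological passages of $\alpha$ through the corner points $a_1, a_2$.
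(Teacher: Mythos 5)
Your proposal is correct and follows essentially the same route as the paper's proof: split by whether $p\in a^s$ (where Lemma \ref{as} gives $C^s_D(p)=a^s$), use the arc structure plus transversality to see that every intersection point with $a^u$ is an endpoint of $C^s_D(p)$ (hence $\leq 2$), use Lemma \ref{tamanho uniforme} with $\diam D<\delta$ to get an endpoint on $\partial D$ and disjointness from $a^s$ to force it onto $a^u$ (hence $\geq 1$), and invoke the $D$-version of Lemma \ref{separadelta} for the spine conclusion when the count is $1$.

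One remark on the ``main obstacle'' you flag: the local coincidence of $C^s_\eps(p)$ and $C^s_\eps(x)$ at a common regular point $q$ is in fact automatic here, precisely because of Proposition \ref{arcs} and Lemma \ref{1or2}. Both arcs lie in $C^s_{2\eps}(q)$ by the triangle inequality, and $C^s_{2\eps}(q)$ is itself an arc (no branching by Lemma \ref{1or2}); two sub-arcs of an arc sharing an interior point must agree on a neighborhood of it. This is exactly what the paper compresses into the phrase ``by Lemma \ref{1or2}'' when asserting $C^s_D(p)\cap a^s=\emptyset$ for $p\notin a^s$. The paper's presentation is cleaner in one respect: by establishing $C^s_D(p)\cap a^s=\emptyset$ first, it avoids having to discuss what happens at the corners $a_1,a_2$ separately, whereas your write-up treats the corners as a distinct sub-case using the regularity condition. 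Both resolutions are sound.
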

    \begin{proof}
        If $p\in a^s$, then $C^s_D(p)=a^s$ since $D$ is regular (see Lemma \ref{as}). This implies that $\#(C^s_D(p)\cap a^u) = \#(a^s\cap a^u)=2$ (See Fig. \ref{fig:setorantigo}).
        If $p\in D\setminus a^s$, then $C^s_D(p)\cap a^s=\emptyset$, by Lemma \ref{1or2}, which implies that $C^s_D(p)\cap a^u \neq \emptyset$ since Lemma \ref{tamanho uniforme} ensures that $C^s_D(p)\cap (a^s\cup a^u) \neq \emptyset$. Therefore $\#(C^s_D(p)\cap a^u)\geq 1$.
        Note that every intersection between $C^s_D(p)$ and $a^u$ is transversal and this together with Lemma \ref{1or2} ensure that $\#(C^s_D(p)\cap a^u)\leq 2$.
        The last part of the statement is obtained using Lemma \ref{separadelta}, since $\#(C^s_D(p)\cap a^u)=1$ implies that $C^s_D(p)$ does not separate $D$, and, hence, contains a spine.
    \end{proof}
    Note that in the non-regular sectors of Figure \ref{fig:setorbugado21}, while $a^s\cap a^u=\{a_1,a_2\}$, we have $C^s_D(p)\cap a^u=\{a_1,a_2,a_3\}$. 
    Following \cite{Artigue2013NexpansiveHO}, we define an order in the set $\mc{F}^s=\{C^s_D(x):x\in D\}$ as follows: $C^s_D(x)<C^s_D(y)$ if $a^s$ and $C^s_D(y)$ are separated by $C^s_D(x)$, i.e.\ , $a^s$ and $C^s_D(y)$ are in different components of $D\setminus C^s_D(x)$. Note that $a^s$ is a minimal element for the order and if $y\in \inte D\cap\espinha$, then $C^s_D(y)$ does not separate $D$ and $C^s_D(y)$ cannot be smaller than $C^s_D(z)$ for any $z\neq y$ in $D$.
    The following lemma is based on Lemma 3.2 in \cite{Artigue2013NexpansiveHO}. The regularity condition on the sector allows us to basically follow the original proof.
    
    \begin{lemma}\label{order}
        The order $<$ in $\mc{F}^s$ is total.
    \end{lemma}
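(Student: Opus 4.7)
The plan is to proceed by case analysis on how two distinct elements $C^s_D(x)$ and $C^s_D(y)$ of $\mathcal{F}^s$ sit inside the disk $D$, exploiting the tight control on intersections with $a^u$ from Lemmas \ref{setorbonito} and \ref{transversal}.

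First I would observe that distinct members of $\mathcal{F}^s$ are automatically disjoint: if some $z\in C^s_D(x)\cap C^s_D(y)$ existed, each set would be a connected subset of $C^s(z)\cap D$ containing $z$, so each equals the connected component of $z$ in $C^s(z)\cap D$ and hence $C^s_D(x)=C^s_D(y)$. I would then reduce to the generic situation where both $C^s_D(x)$ and $C^s_D(y)$ separate $D$. The edge cases are easy: if $x\in a^s$ then Lemma \ref{as} yields $C^s_D(x)=a^s$, which is the minimum; and if $C^s_D(x)$ contains a spine, Lemma \ref{setorbonito} says that it meets $a^u$ at a single point and does not separate $D$, forcing any separating $C^s_D(y)$ to have all of $C^s_D(x)$ on one side. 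Combined with the standing remark that $C^s_D(x)$ cannot sit below any other element of $\mathcal{F}^s$ when $x$ is an interior spine, this yields $C^s_D(y)<C^s_D(x)$.

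With both continua separating $D$, each is an arc transverse to $a^u$ at exactly two points, and they are disjoint. Recording the cyclic order of the four endpoints along $\partial D$, the disjointness of the two arcs forbids alternation, so only two configurations survive: the two pairs of endpoints along $a^u$ are nested, or they lie in disjoint sub-intervals of $a^u$. In the nested case, comparability is immediate: one directly identifies which component of $D\setminus C^s_D(x)$ contains $C^s_D(y)$ and which contains $a^s$. The disjoint-intervals case is the one that must be excluded.

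The hard part is ruling out this last configuration, which is where the regularity condition on $D$ enters and where we borrow the strategy of Lemma 3.2 in \cite{Artigue2013NexpansiveHO}. Here the inner arc, say $C^s_D(y)$, together with the sub-arc $\gamma\subset a^u$ joining its endpoints, bounds a sub-disk $D'\subset D$ which is itself a bi-asymptotic configuration with one stable side and one unstable side; by regularity of $D$ its corners lie in the interior of $a^u$, so no interference with $a_1$ or $a_2$ occurs. Using Lemma \ref{interior} to pick $p\in\inte D'$ with $p\notin C^s_\eps(y)$, and combining the cw-local-product-structure with Lemma \ref{alternado}, I obtain a branch of $C^u_\eps(p)$ entering $D'$ which, by Lemma \ref{tamanho uniforme}, is forced to reach $\partial D'$; tracing where this branch can exit produces either an unstable arc crossing $a^u$ inside $D$ or a second stable/unstable intersection inside the small sub-sector which, together with the pre-existing intersections, violates the separatrix count of Lemma \ref{1or2}. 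Making this exit analysis rigorous while only using the tools from Section 2 is the main obstacle; the regularity of $D$ is crucial, for otherwise exit points could collide with the corners $a_1,a_2$ and the alternation argument of Lemma \ref{alternado} would break down.
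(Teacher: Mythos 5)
Your proposal takes a genuinely different route from the paper, and it has real gaps. The paper's argument is a connectedness argument: assuming $C^s_D(x)$ and $C^s_D(y)$ are incomparable, it chooses sub-arcs $\gamma_1,\gamma_2,\gamma_3\subset a^u$, forms the connected set $E=D\setminus(C^s_D(x)\cup C^s_D(y))$, defines the closed sets $A_{ij}=\{w\in E : C^s_D(w)\cap\gamma_i\neq\emptyset,\ C^s_D(w)\cap\gamma_j\neq\emptyset\}$, shows they are nonempty and cover $E$, and then uses connectedness of $E$ to produce a point $z$ with $\#(C^s_D(z)\cap a^u)\geq 3$, contradicting Lemma~\ref{setorbonito}; the other configurations are handled by the same scheme with a different choice of $\gamma_i$ and of $E$. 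Your proposal instead attempts a geometric case analysis on the positions of the endpoints on $a^u$.

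The first gap is that you never treat the case where \emph{both} $C^s_D(x)$ and $C^s_D(y)$ contain spines. This is the main case in the paper's proof, and it cannot be folded into your ``edge cases'': if neither arc separates $D$, then by definition neither $C^s_D(x)<C^s_D(y)$ nor $C^s_D(y)<C^s_D(x)$ can hold, so totality forces you to show that two distinct non-separating elements of $\mathcal F^s$ simply cannot coexist in a regular sector. That requires an argument, and your reduction to ``both separate $D$'' silently drops it.

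The second gap is in the case where $C^s_D(x)$ contains a spine and $C^s_D(y)$ separates $D$. You correctly record that $C^s_D(x)$ lies entirely on one side of $C^s_D(y)$ and that $C^s_D(x)\not<C^s_D(y)$, but then conclude $C^s_D(y)<C^s_D(x)$. This does not follow: if the single point of $C^s_D(x)\cap a^u$ lies outside the sub-arc of $a^u$ cut off by $C^s_D(y)$, then $C^s_D(x)$ is on the same side of $C^s_D(y)$ as $a^s$, and the two arcs are again incomparable. Deducing comparability here is exactly what needs to be proved, not assumed. Finally, as you acknowledge, the separatrix-counting exit argument meant to kill the ``disjoint sub-intervals'' configuration is only sketched and is the hard part of your plan, whereas the paper's $A_{ij}$-covering device handles that configuration and both of the cases above uniformly and with essentially no additional work.
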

    \begin{proof}
        Let $C^s_D(x)$ and $C^s_D(y)$ be different elements of $\mc{F}^s$ and suppose by contradiction that neither $C^s_D(x)<C^s_D(y)$ nor $C^s_D(y)<C^s_D(x)$. We assume that $x$ and $y$ are spines since in the other cases we obtain the result similarly. Consider $\gamma_1,\gamma_2,\gamma_3\subset a^u$ sub-arcs as in Figure \ref{fig:ot1}.
        \begin{figure}[H]
        \centering
        \includegraphics[width=1\textwidth]{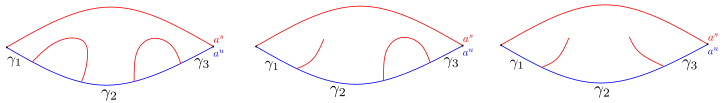}
        \caption{Non-comparable stable arcs.}
        \label{fig:ot1}
    \end{figure}
        Since $x$ and $y$ are spines, it follows that $E = D\setminus (C^s_D(x)\cup C^s_D(y))$ is connected. For $1\leq i < j \leq 3$, define $$A_{ij}=\{x\in E:C^s_D(x)\cap \gamma_i\neq \emptyset, C^s_D(x)\cap \gamma_j \neq \emptyset\}.$$
        By the definition of the subarcs, we have that $A_{ij}$ is non-empty for all $1\leq i < j \leq 3$. In addition, these sets are closed and cover $E$. Since $E$ is connected, we can find $z$ that belongs to all of them. Hence $\#(C^s_D(z)\cap a^u) \geq 3$ and this contradicts Lemma \ref{setorbonito}. In the other cases we just need to choose the appropriate arcs $\gamma_i$ and change the definition of the set $E$ accordingly. Figure \ref{fig:order4} illustrates these choices.
        \begin{figure}[H]
            \centering
            \includegraphics[width=0.50\textwidth]{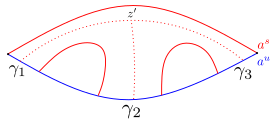}
            \caption{}
            \label{fig:order4}
        \end{figure}
    \end{proof}
    Note that the regularity condition is important to conclude the order is total since non-regular sectors can contain points $z\in\inte D$ such that $\#(C^s_D(z)\cap a^u)\geq 3$, so the existence of a point in the intersection of $A_{ij}$ would not imply a contradiction.
    
    Lemma \ref{order} ensures that inside a regular bi-asymptotic sector there is at most one spine, but does not necessarily prove the existence of a spine. This will be consequence of the following result that proves continuity for the variation of the arcs inside a regular bi-asymptotic sector. It is based on Lemma 3.3 in \cite{Artigue2013NexpansiveHO}.
    Lemma \ref{setorbonito} ensures that $\#(C^s_D(x)\cap a^u)\leq2$
    for every $x\in a^u$. Then we consider a map $g:a^u\to a^u$ (see Figure \ref{fig:deforder}) defined as $$C^s_D(x)\cap a^u=\{x,g(x)\}.$$ Note that if $C^s_D(x)\cap a^u=\{x\}$, then $g(x)=x$ and Lemma \ref{setorbonito} ensures that $C^s_D(x)$ contains a spine.
        \begin{figure}[H]
            \centering
            \includegraphics[width=0.50\textwidth]{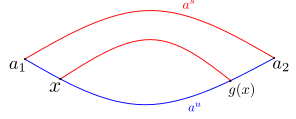}
            \caption{}
            \label{fig:deforder}
        \end{figure}
    Note that we could have problems to define $g$ in sectors that are not regular since in these cases $C^s_D(a_1)$ may not coincide with $a^s$ and intersect $a^u$ in three different points.

    \begin{lemma}
        The map $g:a^u\to a^u$ is continuous.
    \end{lemma}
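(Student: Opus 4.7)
The plan is to deduce continuity of $g$ from two ingredients: weak monotonicity of $g$ (as a self-map of the parametrized arc $a^u$) and the functional identity $g\circ g=\mathrm{id}$ on the non-spine points, rather than attempting a direct Hausdorff-limit argument (which is delicate because the arcs $C^s_D(x_n)$ could have both endpoints on $a^u$ collapsing to a single point without the arcs themselves collapsing).

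First I would fix an orientation-preserving homeomorphism $\varphi\colon[0,1]\to a^u$ with $\varphi(0)=a_1$ and $\varphi(1)=a_2$, and work with $\tilde g=\varphi^{-1}\circ g\circ\varphi$. By Lemma \ref{as} we have $\tilde g(0)=1$ and $\tilde g(1)=0$. I would then establish that $\tilde g$ is weakly decreasing. Given $0\leq t_1<t_2\leq 1$, set $x_i=\varphi(t_i)$ and invoke Lemma \ref{order} to place $C^s_D(x_1)$ and $C^s_D(x_2)$ under the total order. If $C^s_D(x_1)=C^s_D(x_2)$, then Lemma \ref{setorbonito} forces $\{x_1,x_2\}=\{x_1,g(x_1)\}$, yielding $g(x_1)=x_2$ and, symmetrically, $g(x_2)=x_1$, so $\tilde g(t_2)<\tilde g(t_1)$. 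Otherwise exactly one of them is strictly smaller in the order; the geometry of the Jordan subdisk ("pocket") cut off by the smaller arc against $a^u$ forces the endpoints of the larger arc to lie on the sub-arc of $a^u$ joining the endpoints of the smaller one, and after checking both sign conventions for which endpoint is $x_i$ versus $g(x_i)$ this yields $\tilde g(t_2)\leq \tilde g(t_1)$. The spine cases $g(x_i)=x_i$ are handled by noting that a spine's stable continuum is maximal in $<$ and meets $a^u$ at a single point.

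Next I would rule out jump discontinuities. Any weakly decreasing function on $[0,1]$ has only jump discontinuities, so suppose $\tilde g$ jumps at some $t_0$ with $\alpha:=\tilde g(t_0^-)>\tilde g(t_0^+)=:\beta$. By monotonicity the image of $\tilde g$ misses the open interval $(\beta,\alpha)$ except possibly at $\tilde g(t_0)$. For any $s\in(\beta,\alpha)\setminus\{\tilde g(t_0)\}$, if $\tilde g(s)\neq s$, then the identity $\tilde g\circ\tilde g(s)=s$ (valid because $C^s_D(\varphi(s))=C^s_D(\varphi(\tilde g(s)))$ whenever $\tilde g(s)\neq s$, by Lemma \ref{setorbonito}) forces $s$ to lie in the image of $\tilde g$, a contradiction. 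Thus $\tilde g(s)=s$ for every such $s$, meaning $C^s_D(\varphi(s))$ contains a spine. But $D$ contains at most one spine $p_0$ (an immediate consequence of Lemma \ref{order}), so all such continua coincide with $C^s_D(p_0)$, which meets $a^u$ at a single point. Therefore at most one exceptional $s$ can exist, contradicting the uncountability of $(\beta,\alpha)$. Hence $\tilde g$ has no jumps, so $\tilde g$, and consequently $g$, is continuous.

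The hard part is the monotonicity step: while Lemma \ref{order} supplies the total-order machinery, translating "$C^s_D(x_1)<C^s_D(x_2)$" into the concrete positional statement "the endpoints of $C^s_D(x_2)$ on $a^u$ lie strictly between those of $C^s_D(x_1)$" requires careful bookkeeping of orientations, together with the regularity condition (via Lemma \ref{as}) that anchors $a^s$ as the unambiguous minimum of the order — precisely what fails in the non-regular pictures of Figures \ref{fig:setorbugado21}--\ref{fig:setorbugado44}.
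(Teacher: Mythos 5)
Your proposal is correct and follows the same strategy as the paper: use the total order from Lemma \ref{order} to show $g$ is decreasing, then deduce continuity from bijectivity. For the monotonicity step, your ``pocket'' nesting picture carries the same geometric content as the paper's explicit casework on the cyclic order of $x,y,g(x),g(y)$ along $a^u$ (non-nested configurations are ruled out by Lemma \ref{order} for incomparable arcs and Lemma \ref{1or2} for crossing arcs). For the continuity step, the paper simply invokes that a monotone bijection of an arc is automatically continuous; your jump-elimination via spine uniqueness is correct but unnecessarily elaborate, since $\tilde g\circ\tilde g=\mathrm{id}$ also holds trivially at fixed points, so surjectivity alone already forbids the image of $\tilde g$ from skipping an open interval, with no need to invoke the uniqueness of the spine in $D$.
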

    \begin{proof}
        As $a^u$ is homeomorphic to $[0,1]$, we can induce an ordering in $a^u$ such that $a_1<a_2$. We prove that $g$ is decreasing with this order, and since $g$ is bijective, we conclude continuity.
        Suppose, by contradiction, that $g$ is not decreasing, so there exist $x<y$ such that $g(x)<g(y)$. Note that $g(x)\neq x$ since $x<y$ and $x=g(x)<g(y)$ ensure the arcs $s(x,g(x);x)$ and $s(y,g(y);y)$ are not comparable, contradicting Lemma \ref{order}. The same reason ensures that $g(y)\neq y$. If $x<y<g(x)<g(y)$, then there is an intersection between $s(x,g(x);x)$ and $s(y,g(y);y)$, contradicting Lemma \ref{1or2}. If $x<g(x)<y<g(y)$ the arcs $s(x,g(x);x)$ and $s(y,g(y);y)$ are not comparable, contradicting Lemma \ref{order}. Other cases are obtained from these cases interchanging $x$ and $g(x)$ or $y$ and $g(y)$, leading to the same contradictions.
    \end{proof}

    The following proposition gathers all results we obtained so far about bi-asymptotic sectors. It is one of the directions of the equivalence between the existence of sectors and spines that we want to prove.

    \begin{proposition}\label{espinhasetor}
        If $D$ is a bi-asymptotic sector with diameter less than $\delta$, then $\inte D\cap \espinha(f)\neq \emptyset$. Moreover, if $D$ is regular, then $\# (\inte D\cap \espinha(f))=1$.
    \end{proposition}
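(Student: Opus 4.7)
The plan splits the proof into the regular case, where both existence and uniqueness are established directly using the machinery developed so far, and the non-regular case, which I reduce to the regular one by extracting a regular sub-sector inside $D$.

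For existence in the regular case, I would apply the intermediate value theorem to the continuous map $g\colon a^u\to a^u$. By Lemma \ref{as}, regularity gives $C^s_D(a_1)=C^s_D(a_2)=a^s$, so $g(a_1)=a_2$ and $g(a_2)=a_1$. Parametrizing $a^u$ by $[0,1]$ with $a_1\mapsto 0$ and $a_2\mapsto 1$, the continuous function $t\mapsto g(t)-t$ is positive at $0$ and negative at $1$, producing a fixed point $t_0\in(0,1)$, which corresponds to some $x_0\in\inte a^u$. Then $\#(C^s_D(x_0)\cap a^u)=1$, so by Lemma \ref{setorbonito} the arc $C^s_D(x_0)$ contains a spine. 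Since $x_0$ is interior to the unstable arc $a^u$ we have $P^u(x_0)\geq 2$ and hence, by Corollary \ref{separatrices}, $P^s(x_0)\geq 2$, so $x_0$ itself is not a spine; the spine must then be the other endpoint of $C^s_D(x_0)$, which lies in $\inte D$.

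For uniqueness in the regular case, suppose $y_1,y_2\in\inte D\cap\espinha(f)$. Each $C^s_D(y_i)$ is an arc joining $y_i$ to a unique point of $a^u$, and as noted just before Lemma \ref{order} cannot be strictly smaller than any other element of $\mc{F}^s$. By the totality of the order (Lemma \ref{order}), $C^s_D(y_1)$ and $C^s_D(y_2)$ must be comparable, and since neither can be strictly smaller than the other, they coincide as continua. But then $y_1$ and $y_2$ are spine endpoints of the same arc whose other endpoint lies on $a^u\subset\p D$, and so $y_1=y_2$.

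For the non-regular case, I would extract a regular sub-sector $D'\subset D$ and then invoke the regular case. Failure of regularity, say at the corner $a_1$ for $\sigma=s$, means that arbitrarily close to $a_1$ there are points of $C^s_\eps(x)\cap\inte D$ not belonging to $a^s$. By cw$_F$-expansiveness the set $C^s_\eps(x)\cap C^u_\eps(x)$ is finite, so this upgrades to the existence of finitely many additional intersection points of $C^s_\eps(x)$ with $a^u$ inside $D$ (as in Figure \ref{fig:setorbugado22}), which can be used to cut off a proper bi-asymptotic sub-sector of $D$ whose corners carry strictly fewer regularity failures. Iterating this procedure terminates, again by the finiteness from cw$_F$, in a regular sub-sector $D'\subset D$, whose unique interior spine lies in $\inte D$. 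The main obstacle will be making this extraction and its termination fully rigorous: one must identify a well-defined monotone combinatorial invariant that strictly decreases at each cut, and simultaneously handle failures of both types ($\sigma=s$ and $\sigma=u$) at the two corners of $D$.
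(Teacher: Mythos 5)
Your regular-case argument is essentially the one in the paper. Existence comes from a fixed point of the continuous map $g$ (your intermediate-value argument, using $g(a_1)=a_2$ and $g(a_2)=a_1$, is equivalent to the paper's appeal to continuity of $g$ on an arc), and Lemma \ref{setorbonito} then gives a spine; uniqueness via the totality of $<$ on $\mc{F}^s$ is a legitimate variant of the paper's route via $g$ being strictly decreasing. One step you state without justification is that the spine endpoint of $C^s_D(x_0)$ lies in $\inte D$: it cannot be on $a^u$ because $\#(C^s_D(x_0)\cap a^u)=1$, and it cannot be on $a^s$ because Lemma \ref{as} would then force $a^s\subset C^s_D(x_0)$ and hence $\#(C^s_D(x_0)\cap a^u)\geq 3$, contradicting Lemma \ref{setorbonito}. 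This is easy to fill but should be said.

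The non-regular case has a genuine gap, which you partly flag yourself. You assert that a regularity failure at a corner forces additional intersection points of $C^s_\eps(x)$ with $a^u$. That inference does not hold: by Proposition \ref{arcs}, the branch of $C^s_\eps(x)$ entering $\inte D$ through, say, $a_2$ is an arc that cannot re-intersect $a^s$ (both are sub-arcs of the same arc $C^s_\eps(x)$), so it either crosses $a^u$ or terminates at a spine in $\inte D$ without ever meeting $a^u$. The latter possibility is not covered by your cut-off scheme, and yet it is the easy case: the component of $C^s_\eps(x)\cap D$ through $a_2$ then fails to separate $D$, and Lemma \ref{separadelta} (applied to $D$ in place of a ball) immediately yields a spine in $\inte D$. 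In the former case no iteration is needed either: the paper observes that transversality (Lemma \ref{transversal}) already makes the sub-sector bounded by $s(a_2,y;x)$ and $u(a_2,y;x)$ regular, so a single cut reduces to the regular case. The monotone combinatorial invariant you would need to make your iterative extraction terminate is thus a detour; the dichotomy to set up is ``the extended arc crosses $a^u$'' versus ``the extended arc ends at a spine without meeting $a^u$,'' and each branch closes in one step.
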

    \begin{proof}
        First, we note that every regular bi-asymptotic sector $D$ contains a unique spine in its interior. Since $a^u$ is homeomorphic to $[0,1]$ and $g\colon a^u\to a^u$ is continuous, it follows that $g$ has a fixed point, and since $g$ is decreasing, this fixed point is unique. Clearly, $a_1$ and $a_2$ are not fixed points of $g$. This and the transversality proved in Lemma \ref{transversal} ensure that this single spine is contained in $\inte D$.
        
        Now let $D$ be a non-regular bi-asymptotic sector. 
        Without loss of generality, let us assume that $D$ does not satisfy the regularity condition at $a_2$, that is, $C^s_\eps(x)$ enters $D$ through $a_2$. If $C^s_\eps(x)$ does not intersect the open arc $a^u\setminus\{a_1,a_2\}$, then the connected component of $C^s_\eps(x)$ containing $a_2$ does not separate $D$, and by Lemma \ref{separadelta} we find a spine in $\inte D$. If $C^s_\eps(x)$ intersects $a^u\setminus \{a_1,a_2\}$ at a point $y$, then the transversality of the intersection ensures that $$s(a_2,y;x)\cup u(a_2,y;x)$$ bounds a regular bi-asymptotic sector contained in $D$ (see Figure \ref{fig:setorcontido}). Thus, there exists a spine in $\inte D$.
        \begin{figure}[H]
            \centering
            \includegraphics[width=0.4\textwidth]{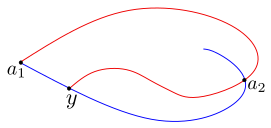}
            \caption{}
            \label{fig:setorcontido}
        \end{figure}
    \end{proof}

    Now we prove the other direction of the equivalence. 

    \begin{lemma}\label{spinesector}
    If $x\in \espinha(f)$, then there exists $D_x$ a regular bi-asymptotic sector such that $x\in \inte D_x$.
    \end{lemma}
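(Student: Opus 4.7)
Take $x\in\espinha(f)$. The plan is to construct $D_x$ explicitly as a Jordan disk bounded by one stable and one unstable sub-arc of a regular point $y$ near $x$, and then pass to a regular sub-sector by iterative shrinking. To set up the local geometry, I fix $r_0$ from Lemma~\ref{alternado} at $x$ and choose $r\in(0,r_0)$ small enough that the arcs $\alpha^s:=c.c_x(C^s_\eps(x)\cap\overline{B_r(x)})$ and $\alpha^u:=c.c_x(C^u_\eps(x)\cap\overline{B_r(x)})$ meet only at $x$ (using $r<r_0$) and reach $\partial B_r(x)$ (using Lemma~\ref{tamanho uniforme}). Their union is then a Jordan arc joining two points of $\partial B_r(x)$ through $x$, separating $B_r(x)$ into two open Jordan regions $U_1$ and $U_2$. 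By Corollary~\ref{interior}, I pick a regular point $y\in U_1$ arbitrarily close to $x$ with $y\notin\alpha^s\cup\alpha^u$.

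The crucial step is to produce a point $w\in C^s_\eps(y)\cap C^u_\eps(y)\setminus\{y\}$. By the cw-local-product-structure applied to $(y,x)$ we obtain $p\in C^s_\eps(y)\cap\alpha^u$ and $q\in C^u_\eps(y)\cap\alpha^s$, both close to $x$; by Lemma~\ref{transversal} both intersections are transversal, so $C^s_\eps(y)$ continues past $p$ into $U_2$ and $C^u_\eps(y)$ continues past $q$ into $U_2$. Applying the cw-local-product-structure to a point of $C^s_\eps(y)\cap U_2$ and a point of $C^u_\eps(y)\cap U_2$ lying near $x$, and using the local agreement of $\eps$- and $2\eps$-stable continua from Lemma~\ref{epsigual2eps}, produces the desired $w$. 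The arcs $s(y,w;y)$ and $u(y,w;y)$ then share endpoints $y$ and $w$ and bound a Jordan disk $D$. Since this Jordan curve crosses $\alpha^u$ transversely at $p$ and $\alpha^s$ transversely at $q$, the portion of $\alpha^s\cup\alpha^u$ joining $q$ to $p$ through $x$ is trapped in $\inte D$; in particular $x\in\inte D$.

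If $D$ fails to be regular --- say the second local branch of $C^s_\eps(y)$ at the corner $y$, which exists since $P^s(y)=2$ by Lemma~\ref{1or2}, enters $\inte D$ --- then since $C^s_\eps(y)$ is an arc by Proposition~\ref{arcs} this branch cannot meet $s(y,w;y)$ again and must exit through $u(y,w;y)$ at a new point $w'\in C^s_\eps(y)\cap C^u_\eps(y)$ strictly inside $u(y,w;y)$. The arcs $s(y,w';y)$, $u(y,w';y)$ bound a sub-sector $D'\subset D$ and $s(w',w;y)$, $u(w',w;y)$ bound a sub-sector $D''\subset D$; since $\inte D=\inte D'\cup u(y,w';y)\cup\inte D''$ and $x\notin u(y,w';y)$, exactly one of $D'$, $D''$ contains $x$ in its interior. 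Iterating this shrinking (and the analogous procedure at the corner $w$ and for the unstable second branches), the finiteness of $C^s_\eps(y)\cap C^u_\eps(y)$ from cw$_F$-expansiveness forces termination at a regular sector $D_x$ containing $x$ in its interior.

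The main obstacle is the production of $w\in C^s_\eps(y)\cap C^u_\eps(y)\setminus\{y\}$, the step that uses the spine structure at $x$ in an essential way. One must combine the cw-local-product-structure at several nearby points in $U_2$ with the cw$_F$-finiteness of intersections to guarantee that after crossing the separator $\alpha^s\cup\alpha^u$, the continuations of $C^s_\eps(y)$ and $C^u_\eps(y)$ actually meet again at a common point lying on both continua (as opposed to on nearby but distinct arcs). Once this wrapping behavior is secured, the rest of the construction is purely topological.
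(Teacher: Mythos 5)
Your proof takes a genuinely different route from the paper's. The paper chooses $y\in C^s_\eps(x)$ (on the stable separatrix itself), works in a neighborhood $U$ of the entire local stable continuum, and then finds the sector by a connectedness argument on $\partial V\setminus\{\tau\}$: it exhibits a single auxiliary point $z$ whose stable continuum $C^s_{\eps/2}(z)$ hits \emph{both} halves $c_1,c_2$ of $C^u_{\eps/2}(y)$, producing two points $a_1,a_2\in C^s_{\eps/2}(z)\cap C^u_{\eps/2}(y)$ that serve as the corners. Crucially, this finds intersections between the stable continuum of \emph{one} point and the unstable continuum of a \emph{different} point, which is exactly what the cw-local-product-structure directly provides, and a separate case analysis (whether both sets $\mathcal{E}_1,\mathcal{E}_2$ are non-empty or one is empty) is needed to make the connectedness argument go through. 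You instead place $y$ in a complementary open region $U_1$ and try to directly produce a self-intersection $w\in C^s_\eps(y)\cap C^u_\eps(y)\setminus\{y\}$. This is a more symmetric-looking construction, but it asks more of the local product structure than the definition gives.

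The gap is precisely the one you flag in your last paragraph: the cw-local-product-structure applied to $a\in C^s_\eps(y)\cap U_2$ and $b\in C^u_\eps(y)\cap U_2$ yields $w\in C^s_{\eps'}(a)\cap C^u_{\eps'}(b)$, \emph{not} $w\in C^s_\eps(y)\cap C^u_\eps(y)$, and you do not carry out the upgrade. This can in fact be repaired, but it requires a concrete argument that you only gesture at: concatenate the arc from $w$ to $a$ inside $C^s_{\eps'}(a)$ with the arc from $a$ to $y$ inside $C^s_\eps(y)$ to conclude $w\in C^s_{\eps+\eps'}(y)\subset C^s_{2\eps}(y)$, then choose all parameters (the distance of $y$ to $x$, the distance of $a,b$ to $x$, and $\eps'$) small enough that $w\in B_\delta(y)$ with $\delta$ given by Lemma~\ref{epsigual2eps}, which then forces $w\in C^s_\eps(y)$; and symmetrically for $u$. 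As written, the quantifiers are not managed and the claim "produces the desired $w$" is an assertion rather than a proof. You also need to rule out $p,q\in\espinha(f)$ before invoking Lemma~\ref{transversal} (and you cannot use Corollary~\ref{spinecountable} for this, as it depends on the present lemma), you need $s(y,w;y)\cap u(y,w;y)=\{y,w\}$ to get a Jordan disk, and the claim that $x\in\inte D$ deserves a short Jordan-curve argument of its own. None of these are fatal, but the proof as submitted is incomplete at its central step, and the paper's route --- finding $a_1,a_2$ as mixed intersections $C^s(z)\cap C^u(y)$ rather than as a self-intersection --- sidesteps the hardest of these issues entirely.
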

    \begin{proof}
        We begin choosing $y\in C^{s}_{\eps}(x)\cap B_{\frac{\delta}{2}}(x)$ such that $s(x,y;x) \subset B_{\frac{\delta}{2}}(x)$. Since $C^{u}_{\frac{\eps}{2}}(y)$ is an arc transversal to $C^{s}_{\eps}(x)$ at $y$ and $y\notin\espinha(f)$, we can choose a small neighborhood $U$ of $C^{s}_{\eps}(x)$ and $t_1,t_2\in C^u_{\frac{\eps}{2}}(y)\cap \partial U$ satisfying:
        \begin{enumerate}
            \item $t_1\not \sim t_2$,
            \item $u(t_1,t_2;y)$ intersects $\partial U$ only at $t_1$ and $t_2$,
            \item $u(t_1,t_2;y)\cap C^s_\eps(x)=\{y\}$.
        \end{enumerate}
        Since $u(t_1,t_2;y)$ is transversal to $C^s_\eps(x)$ at $y$, and $u(t_1,t_2;y)\cap C^s_\eps(x)=\{y\}$, then $u(t_1,t_2;y)$ divides both $U$ and $C^s_\eps(x)$ in exactly two components (see Figure \ref{fig:espinhasetor1}).
    \begin{figure}[H]
        \centering
        \includegraphics[width=0.17\textwidth]{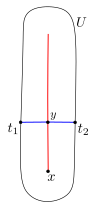}
        \caption{}
        \label{fig:espinhasetor1}
    \end{figure}
        The semi-continuity of the map $a\to C^{s}_{\frac{\eps}{2}}(a)$ (see page 15 and Theorem 6.7.1 of \cite{artigue_2018}) allows us to choose a disk $V$ centered at $x$ such that $$C^{s}_{\frac{\eps}{2}}(z)\subset U \,\,\,\,\,\, \text{for every} \,\,\,\,\,\, z\in V$$ and $C^{s}_{\frac{\eps}{2}}(x)\cap \partial V=\{\tau\}$. In particular, $\partial V\setminus \{\tau\}$ is connected. For $i=1,2$, let $c_i=u(y,t_i,y)$ and
        $$\SE_i = \{z\in \partial V\setminus \{\tau\} : C^{s}_{\frac{\eps}{2}}(z)\cap c_i \neq \emptyset\}.$$
    
        Since $\SE_i$ is closed and $\partial V\setminus \{\tau\}$ is connected, if $\SE_1$ and $\SE_2$ are not empty, then $\SE_1\cap \SE_2 \neq \emptyset$. Hence, by Lemma \ref{1or2} there is $z\in \partial V\setminus \{\tau\}$ such that $C^s_{\frac{\eps}{2}}(z)$ intersects $c_1\setminus \{y\}$ and $c_2\setminus \{y\}$. Choose intersections $a_1,a_2$, respectively, such that $s(z,a_1;z)\setminus \{a_1\}$ and $s(z,a_2;z)\setminus \{a_2\}$ are contained in the component of $x$ in $U\setminus u(t_1,t_2;y)$. Hence these arcs bound a disk $D$ with 
        $$\partial D=s(z,a_1;z)\cup s(z,a_2;z)\cup u(a_1,a_2;y).$$
        \begin{figure}[H]
        \centering
        \includegraphics[width=0.17\textwidth]{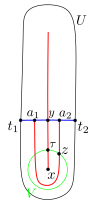}
        \caption{}
        \label{fig:espinhasetor2}
    \end{figure}
        Since $a_1,a_2\in C^{s}_{\frac{\eps}{2}}(z)$, it follows that $a_2\in C^{s}_{\eps}(a_1)$. Also, $a_1,a_2\in C^{u}_{\frac{\eps}{2}}(y)$ ensures that $a_2\in C^{u}_{\eps}(a_1)$. Thus, $$a_2\in C^{s}_{\eps}(a_1)\cap C^{u}_{\eps}(a_1).$$
        The regularity condition follows from the fact that $\inte D$ is contained in one connected component of $U\setminus u(t_1,t_2;y)$ and the intersections in $a_1$ and $a_2$ being transversal ensure that $C^s_\eps(z)$ enters the other component of $U\setminus u(t_1,t_2;y)$ through $a_1$ and $a_2$. This proves that $D$ is a regular bi-asymptotic sector and $x\in \inte D$.
        
        Now assume that $\SE_1\neq\emptyset$ and $\SE_2=\emptyset$. In this case, $\partial V\setminus \{\tau\}\subset \SE_1$. Choose $$y'\in C^s_\eps(x)\cap\inte V$$ and $u(t_1',t_2';y')$ a sub-arc of $C^u_{\frac{\eps}{2}}(y')$ such that $t_1'\not \sim t_2'$ and $u(t_1',t_2';y')$ is contained in $\inte U$ except, possibly, at $t_1'$ and $t_2'$. Since $\diam(C^u_{\frac{\eps}{2}}(y'))\geq\delta$, we can assume that either $t_1'$ or $t_2'$ belongs to $\partial U$. Let us assume that $t_1'\in\partial U$. Then we choose a sequence $(z_n)_{n\in\N}\subset\partial V\setminus \{ \tau \}$ such that 
        \begin{enumerate}
            \item $z_n\to \tau$ as $n\to \infty$,
            \item $z_n$ and $c_1$ are in different components of $$U\setminus (u(y',t_1';y') \cup s(y',y;x) \cup u(y,t_2;y))$$ for every $n\in \N$, and
            \item $C^s_{\frac{\eps}{2}}(z_n)\cap c_1 \neq \emptyset$ for every $n\in \N$.
        \end{enumerate}
    \begin{figure}[H]
        \centering
        \includegraphics[width=0.17\textwidth]{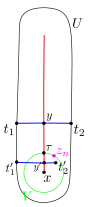}
        \caption{}
        \label{fig:espinhasetor4}
    \end{figure}
    In particular, (3) implies that $$C^s_{\frac{\eps}{2}}(z_n)\cap u(y',t_1';y')\neq\emptyset \,\,\,\,\,\, \text{for every} \,\,\,\,\,\, n\in \N.$$ This ensures the existence of $n_0\in\N$ such that $$C^s_{\frac{\eps}{2}}(z_n)\cap u(y',t_2';y')\neq\emptyset \,\,\,\,\,\, \text{for every} \,\,\,\,\,\, n\geq n_0,$$
    since the intersection between $C^{s}_{\eps}(x)$ and $u(t_1',t_2';y)$ is transversal at $y'$, and the semi-continuity again ensure that $C^s_{\frac{\eps}{2}}(z_n)$ converge to a subset of $C^s_\eps(x)$. As in the previous case, the regularity condition is assured by the transversality of these intersections and we obtain a regular bi-asymptotic sector $D$ with $x\in \inte D$.
    \begin{figure}[H]
        \centering
        \includegraphics[width=0.17\textwidth]{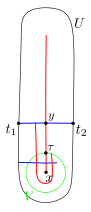}
        \caption{}
        \label{fig:espinhasetor3}
    \end{figure}   
    \end{proof}

    A direct consequence of the previous lemma and Proposition \ref{espinhasetor} is the following:
    
    \begin{corollary}\label{spinecountable}
        Every spine is isolated from other spines, and $\espinha(f)$ is at most countable.
    \end{corollary}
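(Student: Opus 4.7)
The plan is to combine the two main results of Section 3 directly. Given any spine $x\in\espinha(f)$, Lemma \ref{spinesector} produces a regular bi-asymptotic sector $D_x$ with $x\in\inte D_x$. I would first arrange that $\diam D_x<\delta$, which can be done by shrinking the neighborhoods $U$ and $V$ used in the construction of $D_x$ inside the proof of Lemma \ref{spinesector}; this is possible since those neighborhoods are chosen around $x$ and can be made arbitrarily small without affecting the argument. With the diameter bound in hand, Proposition \ref{espinhasetor} applies and gives $\#(\inte D_x \cap \espinha(f))=1$, so $\inte D_x$ is an open neighborhood of $x$ that meets $\espinha(f)$ only at $x$ itself. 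This yields the isolation statement.

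For the countability conclusion, I would use that the surface $S$ is a compact metric space and hence second countable. Isolation from other spines means that $\espinha(f)$ is a discrete subset of $S$ (every point of it is isolated in the induced topology of $\espinha(f)$). Any discrete subset of a second countable space is at most countable: one can pick, for each spine $x$, an open neighborhood $U_x$ of $x$ with $U_x\cap\espinha(f)=\{x\}$, and by the Lindelöf property the cover $\{U_x\}_{x\in\espinha(f)}$ of $\espinha(f)$ admits a countable subcover, forcing $\espinha(f)$ to be countable.

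I do not expect any real obstacle here since both key ingredients are already in hand; the only point requiring a small remark is that Lemma \ref{spinesector} can be applied so as to guarantee $\diam D_x<\delta$, which is needed to invoke the uniqueness part of Proposition \ref{espinhasetor}. Once that is noted, the proof is essentially a one-line deduction followed by a standard second-countability argument.
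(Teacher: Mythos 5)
Your argument is exactly the one the paper intends — it labels the corollary a ``direct consequence'' of Lemma~\ref{spinesector} and Proposition~\ref{espinhasetor} — and you have filled in the two steps (isolation via the unique spine in a regular sector, countability via second countability/Lindel\"of) correctly. The one place where your justification is slightly off is the remark that the neighborhoods in the proof of Lemma~\ref{spinesector} are ``chosen around $x$'' and can be shrunk at will: in fact $U$ there is a neighborhood of the whole arc $C^s_\eps(x)$, which by Lemma~\ref{tamanho uniforme} has diameter at least $\delta$, so $U$ itself cannot be made arbitrarily small; to guarantee $\diam D_x<\delta$ one instead shrinks $V$ and the unstable sub-arc $u(t_1,t_2;y)$, which forces $z$, $a_1$, $a_2$ to lie close to $x$ and the bounding stable arcs $s(z,a_i;z)$ to be short. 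With that correction the proof is sound and matches the paper's route.
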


    In the proof of Lemma \ref{spinesector}, the diameter of the sector $D$ containing the spine $x$ depends on the spine and is not necessarily uniform over all spines, i.e., for a sequence of distinct spines the diameters of the respective sectors could become arbitrarily small. The following Lemma ensures the existence of bi-asymptotic sectors with uniform diameter close to the local stable/unstable continua of every spine. However, these sectors may not contain the associated spines in its interior.

    \begin{lemma}\label{setorlongo}
        Let $x\in \espinha(f)$ and $y\in C^{s}_{\frac{\eps}{2}}(x)$ with $0<\diam(s(x,y;x))<\frac{\delta}{4}$. If $u(a_1,a_2;y)$ is a sub-arc of $C^{u}_\eps(y)$ containing $y$ in its interior, then there exists a neighborhood $V$ of $x$ and $z\in \partial V$ such that $\# (C^{s}_{\eps}(z)\cap u(a_1,a_2;y))\geq 2$. 
    \end{lemma}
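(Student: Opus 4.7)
The plan is to adapt the semi-continuity and wrapping argument used in the proof of Lemma \ref{spinesector}. Since $x$ is a spine, the stable continuum of a regular point $z$ near $x$ cannot terminate at $x$: it must be an arc whose two ends both follow the unique stable separatrix of $x$, so the continuum wraps around $x$, passing near $y$ twice, once from each side of $C^s_\eps(x)$. Each passage is forced to cross the transversal unstable arc $u(a_1,a_2;y)$ away from $y$, producing the required two intersections.

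To set this up rigorously, first I would observe that since $\espinha(f)$ is isolated by Corollary \ref{spinecountable} and $\diam(s(x,y;x))<\tfrac{\delta}{4}$, one may assume $y\notin\espinha(f)$; Lemma \ref{transversal} then guarantees that $u(a_1,a_2;y)$ meets $C^s_\eps(x)$ transversely at $y$. I would then choose a small disk neighborhood $U$ of $s(x,y;x)$ together with sub-arcs $c_1, c_2\subset u(a_1,a_2;y)$ sharing the endpoint $y$ and with other endpoints $t_1, t_2\in\partial U$ lying in opposite connected components of $U\setminus C^s_\eps(x)$. By the semi-continuity of $a\mapsto C^s_{\eps/2}(a)$, I would pick a disk $V$ centered at $x$ with $V\subset U$, such that $C^s_{\eps/2}(z)\subset U$ for every $z\in V$ and $C^s_\eps(x)\cap\partial V=\{\tau\}$; this last property is available because $x$ is a spine, so $P^s(x)=1$. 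In particular, $\partial V\setminus\{\tau\}$ is connected.

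The core of the argument then mirrors that of Lemma \ref{spinesector}. Setting
\[
\SE_i = \{z\in\partial V\setminus\{\tau\}\,:\,C^s_{\eps/2}(z)\cap c_i\neq\emptyset\}, \qquad i=1,2,
\]
we obtain two closed subsets of $\partial V\setminus\{\tau\}$. When both $\SE_1$ and $\SE_2$ are nonempty, the connectedness of $\partial V\setminus\{\tau\}$ together with the fact that every stable continuum of $z\in\partial V$ must cross $c_1\cup c_2$ (forced by the splitting of $U$ by the arc $c_1\cup c_2$ and the containment $C^s_{\eps/2}(z)\subset U$) gives $\SE_1\cup \SE_2=\partial V\setminus\{\tau\}$, and hence $\SE_1\cap \SE_2\neq\emptyset$. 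Any $z$ in this intersection satisfies $C^s_{\eps/2}(z)\cap c_1\neq\emptyset$ and $C^s_{\eps/2}(z)\cap c_2\neq\emptyset$, so $\#(C^s_\eps(z)\cap u(a_1,a_2;y))\geq 2$, as required.

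The main obstacle, exactly as in Lemma \ref{spinesector}, is the degenerate case where one of the sets, say $\SE_2$, is empty. To handle it I would reproduce the second half of that proof: pick an auxiliary $y'\in C^s_\eps(x)\cap\inte V$, a transversal unstable sub-arc $u(t_1',t_2';y')$ with an endpoint on $\partial U$, and a sequence $(z_n)\subset\partial V\setminus\{\tau\}$ with $z_n\to\tau$ and $C^s_{\eps/2}(z_n)\cap c_1\neq\emptyset$ for every $n$. The transversality of $u(t_1',t_2';y')$ with $C^s_\eps(x)$ at $y'$ together with the semi-continuity of the stable map forces, for $n$ large enough, $C^s_{\eps/2}(z_n)$ to additionally cross the side $c_2$ of $u(a_1,a_2;y)$, yielding the required $z=z_n$. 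This careful bookkeeping for the asymmetric case is where most of the work sits.
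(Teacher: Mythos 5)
Your plan is a genuinely different route from the paper's, and it has gaps. The paper does \emph{not} rerun the $\SE_1,\SE_2$ connectedness argument of Lemma~\ref{spinesector} here. Instead it proceeds as follows: take a thin neighborhood $U$ of the \emph{entire} $C^s_\eps(x)$, cut $U$ with a sub-arc $u(a_1',a_2';y)$ of the given unstable arc so that the component $W$ of $x$ in $U\setminus u(a_1',a_2';y)$ has diameter less than $\delta$ (this is exactly where the unused-in-your-proof hypothesis $\diam(s(x,y;x))<\tfrac{\delta}{4}$ enters), then use the countability of $\espinha(f)$ (Corollary~\ref{spinecountable}) to choose $z\in\partial V\setminus\{\tau\}$ near $\tau$ with $C^s_\eps(z)$ containing no spine, and finally invoke Lemma~\ref{separadelta} on the small region $W$: since the ``contains a spine'' alternative is ruled out, $C^s_\eps(z)$ must separate $W$, and since $C^s_\eps(z)\subset U$ the arc must enter and leave $W$ through $u(a_1',a_2';y)$, giving two intersections. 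This avoids any case analysis or fixed-point/covering argument entirely.

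Concrete problems with your version. First, you take $U$ to be a disk neighborhood of the short arc $s(x,y;x)$, which has diameter less than $\delta/4$; but then $C^s_{\eps/2}(z)\subset U$ is impossible for any $z$, because by Lemma~\ref{tamanho uniforme} $\diam C^s_{\eps/2}(z)$ has a uniform positive lower bound while $U$ can be taken with arbitrarily small diameter. The neighborhood $U$ must surround the whole $C^s_\eps(x)$, as in the paper. Second, even after fixing $U$, your assertion $\SE_1\cup\SE_2=\partial V\setminus\{\tau\}$ is not justified: the containment $C^s_{\eps/2}(z)\subset U$ does not force $C^s_{\eps/2}(z)$ to cross $c_1\cup c_2$, since the arc could remain in the component of $x$ and terminate (for instance at a spine) without reaching $u(a_1,a_2;y)$. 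Third, and most seriously, the degenerate case of Lemma~\ref{spinesector} produces a sector whose two intersections lie on the \emph{auxiliary} arc $u(t_1',t_2';y')$, not on the originally prescribed $u(a_1,a_2;y)$; Lemma~\ref{setorlongo} requires two intersections with a \emph{fixed, given} unstable sub-arc, so you cannot freely replace it. Your claim that transversality plus semi-continuity ``forces $C^s_{\eps/2}(z_n)$ to additionally cross $c_2$'' is precisely what would need to be proved, and it is not a consequence of the Lemma~\ref{spinesector} argument. The fact that your argument never uses $\diam(s(x,y;x))<\tfrac{\delta}{4}$ is a strong signal that something essential is missing: that bound is what makes $W$ small enough for Lemma~\ref{separadelta} to do the work, and without it the conclusion of the lemma is false in general.
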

    \begin{proof}
        Let $U$ be a small neighborhood of $C^s_\eps(x)$ and $u(a'_1,a'_2;y)$ be a sub-arc of $u(a_1,a_2;y)$ contained in $U$ such that 
        \begin{enumerate}
            \item $u(a'_1,a'_2;y)\cap C^s_\eps(x) = \{y\}$,
            \item $u(a'_1,a'_2;y)\cap \partial U=\{a_1',a_2'\}$,
            \item $\dist(x, u(a_1',a_2';y))<\frac{\delta}{2}$.
        \end{enumerate}
        The existence of this sub-arc is ensured by $cw_F$-expansiveness. Thus, $u(a'_1,a'_2;y)$ separates $U$ in two components and the diameter of the component $W$ of $x$ is less than $\delta$. As in the proof of Lemma \ref{spinesector}, let $V$ be an open disk centered at $x$ such that $C^s_\eps(z)\subset U$ for all $z\in V$, and let being $\tau$ be the first intersection of $\partial V$ with $C^s_\eps(x)$. There exists $r>0$ such that if $z\in\partial V\setminus\{\tau\}$ is $r$-close to $\tau$, then  $C^s_{\frac{\eps}{2}}(z)$ intersects $u(a'_1,a'_2;y)$. Since $\espinha(f)$ is at most countable (see Corollary \ref{spinecountable}), there exists $$z\in B_r(\tau)\cap\partial V\setminus\{\tau\}$$ such that $C^s_\eps(z)$ does not contain spines. Since $\diam(W)<\delta$, Lemma \ref{separadelta} ensures that $C^s_\eps(z)$ separates $W$. Since $C^s_\eps(z)\subset U$, it follows that $$\#(C^s_{\frac{\eps}{2}}(z)\cap u(a'_1,a'_2;y))\geq 2.$$ 
    \end{proof}

    \begin{figure}[H]
        \centering
        \includegraphics[width=0.60\textwidth]{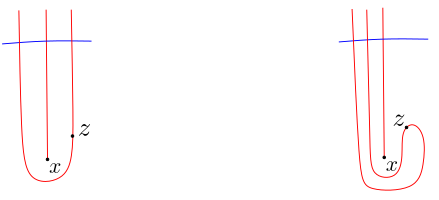}
        \caption{Long bi-asymptotic sectors close to $C^s_\eps(x)$.}
        \label{fig:setorlongo}
    \end{figure}

    \section{$Cw_F$-hyperbolicity and $cw_2$-hyperbolicity}

    In this section we prove Theorem \ref{main}. We first prove that a cw$_F$-hyperbolic surface homeomorphism that is not cw$_2$-expansive must contain infinitely many spines, and then we prove the finiteness of the set of spines for a $cw_3$-hyperbolic homeomorphism using the long bi-asymptotic sectors we constructed at the end of Section 3. These two results ensure that $cw_3$-hyperbolicity implies $cw_2$-hyperbolicity on surfaces. If a cw$_F$-hyperbolic surface homeomorphism is not cw$_1$-hyperbolic, then there exists arbitrarily small bi-asymptotic sectors, but this does not necessarily imply the existence of an infinite number of spines, since all these sectors can converge to the same single spine, as in the pseudo-Anosov diffeomorphism of $\mathbb{S}^2$. If the homeomorphism is not $cw_2$-expansive, then
    for each $\eps>0$ there exists $x\in S$ such that $\#(C^s_\eps(x)\cap C^u_\eps(x))\geq 3$.
    We note below that either there exists two bi-asymptotic sectors with disjoint interiors and connected by their boundaries (see Figure \ref{fig:2biass}) or there exists a non-regular bi-asymptotic sector. In the first case, Lemma \ref{espinhasetor} ensures that each of these sectors must contain a spine, and in the second case we will prove that inside any non-regular sector there exist at least two distinct spines. This ensures the existence of an infinite number of spines, since these sectors cannot accumulate in a pair of two spines when $\eps$ converges to zero.

    \begin{figure}[H]
        \centering
        \includegraphics[width=0.50\textwidth]{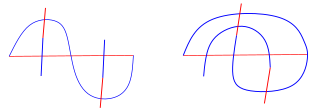}
        \caption{Examples of bi-asymptotic sectors and their spines.}
        \label{fig:2biass}
    \end{figure}
    
    \begin{proposition}\label{notregular}
    If $a^s$ and $a^u$ bound a non-regular bi-asymptotic sector $D$ with $\diam(D)\leq\delta$, then there exist at least two distinct spines in $\inte D$.
    \end{proposition}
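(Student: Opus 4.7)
The plan is to assume without loss of generality that the regularity condition fails at the corner $a_2$, to use the alternation of separatrices at $a_2$ to force both the other stable and the other unstable separatrices into $\inte D$, and then to follow each of these arcs into $D$ to produce two distinct spines.

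First I would observe that $a_2$ cannot be a spine when regularity fails there: a spine has only one stable and one unstable separatrix, so $a^s$ and $a^u$ would already exhaust the local separatrices at $a_2$ and no additional arc of $C^\sigma_\eps(x)$ could enter $\inte D$ at $a_2$. Hence $P^s(a_2)=P^u(a_2)=2$, and Lemma \ref{alternado} forces the four separatrices at $a_2$ to alternate cyclically, with the other two separatrices $s_2$ (stable) and $u_1$ (unstable) interleaving $a^s$ and $a^u$. Non-regularity at $a_2$ says that the wedge of $D$ at $a_2$ bounded by $a^s$ and $a^u$ contains some separatrix besides $a^s,a^u$; alternation then forces it to contain \emph{both} $s_2$ and $u_1$, so both emerge into $\inte D$ at $a_2$.

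Next I would follow $s_2$ and $u_1$ from $a_2$ inside $D$. Since $C^s_\eps(x)$ is a single arc (Proposition \ref{arcs}), $s_2$ and $a^s$ share only $a_2$, so $s_2$ cannot exit $D$ through $a^s$; symmetrically $u_1$ cannot exit through $a^u$. Thus each of $s_2,u_1$ either (i) first meets the opposite boundary arc at an interior point ($y_s\in a^u\setminus\{a_1,a_2\}$ for $s_2$, $y_u\in a^s\setminus\{a_1,a_2\}$ for $u_1$), or (ii) stays inside $D$ and terminates at the endpoint of $C^\sigma_\eps(x)$ in $\inte D$. In case (i) for $s_2$, the arcs $s(a_2,y_s;x)$ and $u(y_s,a_2;x)$ bound a sub-disk $D_s\subset D$; a wedge analysis at $a_2$ (where $D_s$ occupies the wedge between $s_2$ and $a^u$, with $a^s$ and $u_1$ outside) together with transversality at $y_s$ (Lemma \ref{transversal}) show that $D_s$ is a regular bi-asymptotic sector, and Proposition \ref{espinhasetor} supplies a spine $x_1\in\inte D_s\subset\inte D$. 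In case (ii) for $s_2$, the sector version of Lemma \ref{separadelta} applied to $a_1$, whose component $C^s_D(a_1)=a^s\cup s_2$ is a tree attached to $\partial D$ and therefore does not separate $D$, produces a spine on this component; the proof of that lemma pins the spine to the endpoint of $s_2$ in $\inte D$. A parallel analysis of $u_1$ yields a second spine $x_2\in\inte D$.

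The main obstacle is verifying $x_1\ne x_2$. In the generic configurations this is immediate: when both arcs reach the opposite boundary (case (i)+(i)), the sub-disks $D_s$ and $D_u$ lie in different wedges at $a_2$ (between $s_2$ and $a^u$ versus between $a^s$ and $u_1$) and are separated inside $D$ by the arcs $s_2$ and $u_1$, so have disjoint interiors; the mixed case (i)+(ii) is similar, since the spine produced in the ``stays inside'' branch lives at the endpoint of an arc that belongs to one wedge, while the spine produced in the ``meets boundary'' branch lives in the sub-disk on the opposite side. The delicate configuration is (ii)+(ii) with a coincidence: both $s_2$ and $u_1$ stay in $D$ and happen to terminate at the same spine $x_0\in\inte D$. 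Then $s_2\cup u_1$ forms a Jordan curve bounding a sub-disk $D'\subset D$ with corners $a_2$ and $x_0$; regularity of $D'$ is automatic at $x_0$ (a spine has no other separatrices) and holds at $a_2$ because $D'$ occupies only the wedge between $u_1$ and $s_2$, with $a^s,a^u$ outside. Proposition \ref{espinhasetor} then furnishes a second spine $x_3\in\inte D'\subset\inte D$ distinct from $x_0$, completing the required pair.
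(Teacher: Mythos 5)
Your overall strategy mirrors the paper's: use the alternation of separatrices at the non-regular corner to show that \emph{both} the extra stable separatrix $s_2$ and the extra unstable separatrix $u_1$ enter $\inte D$, then follow each until it either exits through the opposite boundary arc or terminates inside $D$, and apply Proposition \ref{espinhasetor} (on regular sub-sectors) or the sector version of Lemma \ref{separadelta} to locate two spines. Your explicit alternation argument is in fact a useful clarification of a step the paper asserts without comment, and your treatment of the (ii)+(ii) coincidence (creating a further regular sub-sector between the two arcs and the common spine) matches the paper's corresponding sub-case.

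There is, however, a genuine gap: throughout you implicitly assume $s_2\cap u_1\cap\inte D=\emptyset$. In case (i)+(i) the assertion that $D_s$ and $D_u$ ``are separated inside $D$ by the arcs $s_2$ and $u_1$, so have disjoint interiors'' is only valid when $s_2$ and $u_1$ meet solely at $a_2$; they are a stable and an unstable sub-arc of $C^{s}_\eps(x)$ and $C^{u}_\eps(x)$, and $cw_F$-expansiveness permits them to cross transversally a (finite, nonzero) number of times inside $D$. When they do cross, $s_2$ can dip into $D_u$ and vice versa, the disjointness of the sub-disks is no longer automatic, and the two spines could a priori coincide. The mixed case (i)+(ii) has the same problem: if $u_1$ crosses $s_2$, it enters $D_s$, and you have not ruled out that its terminal spine is precisely the unique spine of $D_s$. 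Even the (ii)+(ii) coincidence argument breaks if $s_2$ and $u_1$ intersect before reaching $x_0$, since then $s_2\cup u_1$ is not a Jordan curve. The paper spends most of its proof on exactly this crossing situation: it isolates an intersection point $a_n$ at which the two arcs bound a \emph{regular} sub-sector, analyses whether the remaining portions of the arcs re-intersect, and iterates, producing in each sub-case a second sub-sector (or a sub-sector avoiding the first spine) with disjoint interior. That selection-and-iteration idea is the technically hard part of the result, and your proposal omits it entirely.
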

    
    \begin{proof}
     Let $a_1$ and $a_2$ be the end points of $a^s$ and $a^u$ and assume that $$C^s_D(a_1)\cap \inte D\neq\emptyset \,\,\,\,\,\, \text{and} \,\,\,\,\,\, C^u_D(a_1)\cap \inte D\neq\emptyset.$$ Lemma \ref{separadelta} ensures that both $C^s_D(a_1)\setminus\{a^s\}$ and $C^u_D(a_1)\setminus\{a^u\}$ either separate $D$ or contain a spine in $\inte D$. If both separate $D$, then there exist two bi-asymptotic sectors with disjoint interiors and connected by their boundaries, and Lemma \ref{espinhasetor} ensures the existence of two distinct spines in $\inte D$ (see Figure \ref{fig:duasespinhas1}). 
     \begin{figure}[H]
         \centering
         \includegraphics[width=0.4\textwidth]{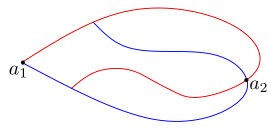}
         \caption{}
         \label{fig:duasespinhas1}
     \end{figure}     
     This figure illustrates the case where $C^s_D(a_1)\setminus\{a^s\}$ does not intersect $C^u_D(a_1)\setminus\{a^u\}$. In the case they intersect, in only a finite number of points by cw$_F$-expansive- ness, then the existence of an intersection $a_n$ such that $s(a_2,a_n;a_1)$ and $u(a_2,a_n;a_1)$ bound a regular sector, ensures the existence of two sectors with disjoint interiors and connected by their boundaries. Indeed, if $C^s_D(a_1)\setminus\{a^s\cup s(a_2,a_n;a_1)\}$ does not intersect $u(a_2,a_n;a_1)$, then $$C^s_D(a_1)\setminus\{a^s\cup s(a_2,a_n;a_1)\} \,\,\,\,\,\, \text{and} \,\,\,\,\,\, C^u_D(a_1)\setminus\{a^u\cup u(a_2,a_n;a_1)\}$$ bound a sector with interior disjoint from the first one (see Figure \ref{fig:duasespinhas22}).
     \begin{figure}[H]
         \centering
         \includegraphics[width=0.4\textwidth]{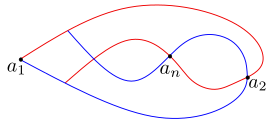}
         \caption{}
         \label{fig:duasespinhas22}
     \end{figure}
     If $C^s_D(a_1)\setminus\{a^s\cup s(a_2,a_n;a_1)\}$ intersects $u(a_2,a_n;a_1)$ at $a_m$, then it also forms a sector with disjoint interior from the first one since the regular intersection in $a_n$ ensures that $s(a_n,a_m;a_1)$ is outside the interior of the first sector. In this case, the sector bounded by $s(a_2,a_m;a_1)$ and $u(a_2,a_m;a_1)$ is not regular at $a_m$ (see Figure \ref{fig:duasespinhas4}). Since both $C^s_D(a_1)\setminus\{a^s\}$ and $C^u_D(a_1)\setminus\{a^u\}$ separate $D$, there exists an intersection $a_j$ such that $s(a_2,a_j;a_1)$ and $u(a_2,a_j;a_1)$ bound a regular sector. Indeed, if $a_m$ is a non-regular intersection as above, then $C^s_D(a_1)\setminus\{a^s\cup s(a_2,a_m;a_1)\}$ intersects $u(a_2,a_m;a_1)$ at $a_j$, and the sector formed by $s(a_2,a_j;a_1)$ and $u(a_2,a_j;a_1)$ is regular since the intersection at $a_j$ comes from inside the non-regular sector bounded by $s(a_2,a_m;a_1)$ and $u(a_2,a_m;a_1)$ (see Figure \ref{fig:duasespinhas4}).
     \begin{figure}[H]
         \centering
         \includegraphics[width=0.4\textwidth]{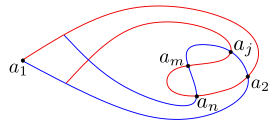}
         \caption{}
         \label{fig:duasespinhas4}
     \end{figure}
     
     
     If both $C^s_D(a_1)\setminus\{a^s\}$ and $C^u_D(a_1)\setminus\{a^u\}$ do not separate $D$, then both end in spines. If these spines are actually the same spine and $C^s_D(a_1)\setminus\{a^s\}$ and $C^u_D(a_1)\setminus\{a^u\}$ do not intersect before the spine, then they bound a bi-asymptotic sector with this spine as one of the end points of the sector, so Lemma \ref{espinhasetor} ensures the existence of a spine in the interior of this sector that is, hence, distinct from the first spine (see Figure \ref{fig:duasespinhas2}).  
     \begin{figure}[H]
         \centering
         \includegraphics[width=0.4\textwidth]{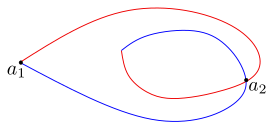}
         \caption{}
         \label{fig:duasespinhas2}
     \end{figure} 
     If $C^s_D(a_1)\setminus\{a^s\}$ and $C^u_D(a_1)\setminus\{a^u\}$ intersect before the spine in the end, then either there is an intersection bounding a regular sector and we argument as in the case above to create two sectors with disjoint interiors, or there are only non-regular intersections and we create a sector between the last one before the spine and the spine. In both cases a second spine appears. Now assume that $C^s_D(a_1)\setminus\{a^s\}$ separates $D$ but $C^u_D(a_1)\setminus\{a^u\}$ does not. If $C^s_D(a_1)\setminus\{a^s\}$ does not intersect $C^u_D(a_1)\setminus\{a^u\}$ in $\inte D$, then $C^s_D(a_1)\setminus\{a^s\}$ forms a bi-asymptotic sector with a sub-arc of $a^u$ that does not contain the spine in $C^u_D(a_1)\setminus\{a^u\}$. Then Lemma \ref{espinhasetor} ensures the existence of a spine in this sector that is, hence, distinct from the first spine (see Figure \ref{fig:setorcontido}). 
     If $C^s_D(a_1)\setminus\{a^s\}$ intersects $C^u_D(a_1)\setminus\{a^u\}$ at $a_i\in\inte D$, and $s(a_2,a_i,a_1)$ and $u(a_2,a_i,a_1)$ bound a regular sector, then there is a spine at the interior of this sector that is distinct from the spine in $C^u_D(a_1)\setminus\{a^u\}$ (see Figure \ref{fig:setorbugado21}).
     If $s(a_2,a_i,a_1)$ and $u(a_2,a_i,a_1)$ bound a non-regular sector, then the spine in $C^u_D(a_1)\setminus\{a^u\}$ is inside this sector, but since $C^s_D(a_1)\setminus\{a^s\}$ separates $D$, it must intersect $C^u_D(a_1)\setminus\{a^u\}$ an additional time creating a sector that does not contain the spine in $C^u_D(a_1)\setminus\{a^u\}$ in its interior (see Figure \ref{fig:duasespinhas44}).
     \begin{figure}[H]
         \centering
         \includegraphics[width=0.4\textwidth]{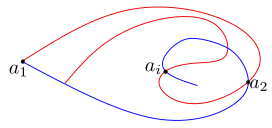}
         \caption{}
         \label{fig:duasespinhas44}
     \end{figure} 
     \end{proof}
     
    We are ready to prove our main theorem.
    
    \begin{proof}[Proof of Theorem \ref{main}]
    Assume that $f$ is a cw$_F$-hyperbolic homeomorphism with a finite number of spines. If $f$ is not $cw_2$-expansive, then
    for each $\alpha\in(0,\delta)$ there exists $x\in S$ such that $\#(C^s_\alpha(x)\cap C^u_\alpha(x))\geq 3$. Using an order $<$ in $C^s_\alpha(x)$ we can choose three consecutive points $a_1,a_2,a_3\in C^s_\alpha(x)\cap C^u_\alpha(x)$, that is, $a_1<a_2<a_3$ and there are no points of $C^s_\alpha(x)\cap C^u_\alpha(x)$ in $(a_1,a_2)$ and $(a_2,a_3)$. This ensures that the stable/unstable arcs connecting $a_1$ to $a_2$, and also $a_2$ to $a_3$, form bi-asymptotic sectors (this is not true in the case there are intersections in $(a_1,a_2)$ or $(a_2,a_3)$ as in Figure \ref{fig:duasespinhas4}). If the by-asymptotic sector formed by the stable/unstable arcs from $a_1$ to $a_2$ is regular in $a_2$, then the stable and unstable arcs from $a_2$ to $a_3$ form a bi-asymptotic sector with interior disjoint from the interior of the sector from $a_1$ to $a_2$. This ensures the existence of two distinct spines $\alpha$-close. If the intersection in $a_2$ is not regular, then Proposition \ref{notregular} ensures the existence of two distinct spines inside the non-regular sector. This proves that for each $\alpha\in(0,\delta)$ there exist two distinct spines $\alpha$-close, and, hence, we obtain an infinite number of distinct spines for $f$, contradicting the assumption. This proves that $f$ is cw$_2$-hyperbolic.

Now we prove that cw$_3$-hyperbolicity implies finiteness of the number of spines. This is the only step of the proof that we do not know how to prove assuming cw$_F$-hyperbolicity. Let $f$ be a $cw_3$-hyperbolic homeomorphism and assume the existence of an infinite number of distinct spines. 
For each $\alpha\in(0,\eps)$ choose $\delta_{\alpha}\in(0,\alpha)$ satisfying the Lemmas \ref{tamanho uniforme}, \ref{cwexpansivecarac}, and \ref{epsigual2eps}.
Consider $x_1$ and $x_2$ spines such that there exists $y\in C^s_\alpha(x_1)\cap C^u_\alpha(x_2)\neq \emptyset$ with 
$$\diam s(x_1,y;x_1)<\frac{\delta_\alpha}{4} \,\,\,\,\,\, \text{and} \,\,\,\,\,\, \diam u(x_2,y;x_2)<\frac{\delta_\alpha}{4}.$$
Note that $y$ is not a spine, since it is contained in the local stable continua of a spine. Lemma \ref{setorlongo} ensures the existence of long bi-asymptotic sectors close to $C^s_{\eps}(x_1)$ and $C^u_{\eps}(x_2)$ intersecting in four distinct points (see Figure \ref{fig:cw3finito4}). Since this can be done for any $\alpha>0$, it follows that $f$ is not cw$_3$-hyperbolic.
    \begin{figure}[H]
        \centering
        \includegraphics[width=0.4\textwidth]{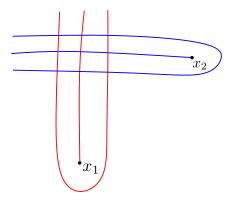}
        \caption{}
        \label{fig:cw3finito4}
    \end{figure}
    \end{proof}

\section*{Acknowledgements}
Bernardo Carvalho was supported by Progetto di Eccellenza MatMod@TOV grant number PRIN 2017S35EHN, and by CNPq grant number 405916/2018-3. Rodrigo Arruda and Alberto Sarmiento were also supported by Fapemig grant number APQ-00036-22.

    \nocite{Hiraide1987ExpansiveHO}
    \nocite{hiraide2toro}
    \nocite{Lewowicz1989ExpansiveHO}
    \nocite{Ma1979ExpansiveHA}
    \nocite{artigue2020continuumwise}
    \nocite{artigueanomalous}
    \bibliography{refs}

@article{Hiraide1987ExpansiveHO,
  title={Expansive homeomorphisms of compact surfaces are pseudo-Anosov},
  author={Koichi Hiraide},
  journal={Osaka Journal of Mathematics},
  year={1987},
  volume={27},
  pages={117-162}
}

@article{Lewowicz1989ExpansiveHO,
  title={Expansive homeomorphisms of surfaces},
  author={Jorge Lewowicz},
  journal={Boletim da Sociedade Brasileira de Matem{\'a}tica - Bulletin/Brazilian Mathematical Society},
  year={1989},
  volume={20},
  pages={113-133}
}

@article{hiraide2toro,
author = {Koichi Hiraide},
title = {{Expansive homeomorphisms with the pseudo-orbit tracing property on compact surfaces}},
volume = {40},
journal = {Journal of the Mathematical Society of Japan},
number = {1},
publisher = {Mathematical Society of Japan},
pages = {123 -- 137},
year = {1988},
doi = {10.2969/jmsj/04010123},
URL = {https://doi.org/10.2969/jmsj/04010123}
}

@article{Ma1979ExpansiveHA,
  title={Expansive homeomorphisms and topological dimension},
  author={Ricardo Ma{\~n}{\'e}},
  journal={Transactions of the American Mathematical Society},
  year={1979},
  volume={252},
  pages={313-319}
}

@article{artigue_2018,
author = {Artigue, Alfonso},
year = {2016},
month = {},
pages = {2860–2912},
title = {Dendritations of Surfaces},
volume = {38},
journal = {Ergodic Theory and Dynamical Systems},
doi = {10.1017/etds.2017.14}
}

@article{kato_1993, title={Continuum-Wise Expansive Homeomorphisms}, volume={45}, DOI={10.4153/CJM-1993-030-4}, number={3}, journal={Canadian Journal of Mathematics}, publisher={Cambridge University Press}, author={Kato, Hisao}, year={1993}, pages={576–598}}

@article{Artigue2013NexpansiveHO,
  title={N-expansive homeomorphisms on surfaces.},
  author={Alfonso Artigue and M. J. Pacifico and J. L. Vieitez},
  journal={Communications in Contemporary Mathematics},
  year={2013},
  volume={19},
  pages={1650040}
}

@ARTICLE{artigue2020continuumwise,
       author = {{Artigue}, Alfonso and {Carvalho}, Bernardo and {Cordeiro}, Welington and {Vieitez}, Jos{\'e}},
        title = "{Continuum-wise hyperbolicity}",
      journal = {arXiv e-prints},
     keywords = {Mathematics - Dynamical Systems},
         year = 2020,
        month = nov,
          eid = {arXiv:2011.08147},
        pages = {arXiv:2011.08147},
          doi = {10.48550/arXiv.2011.08147},
archivePrefix = {arXiv},
       eprint = {2011.08147},
 primaryClass = {math.DS},
       adsurl = {https://ui.adsabs.harvard.edu/abs/2020arXiv201108147A},
      adsnote = {Provided by the SAO/NASA Astrophysics Data System}
}

@article{artigueanomalous,
title = {Anomalous cw-expansive surface homeomorphisms},
journal = {Discrete and Continuous Dynamical Systems},
volume = {36},number = {7},pages = {3511-3518},
year = {2016},
issn = {1078-0947},
doi = {10.3934/dcds.2016.36.3511},
url = {/article/id/5761cf90-3b5e-4230-af56-2bb485f1c121},
author = {Alfonso Artigue},
keywords = {Continuum-wise expansive homeomorphism, surface homeomorphism, derived from Anosov diffeomorphism, continuum theory}
}

@ARTICLE{antunes2023firsttime,
       author = {{Antunes}, Mayara and {Carvalho}, Bernardo},
        title = "{First-time sensitive homeomorphisms}",
      journal = {arXiv e-prints},
     keywords = {Mathematics - Dynamical Systems},
         year = 2023,
          eid = {arXiv:2304.03123},
        pages = {arXiv:2304.03123},
          doi = {10.48550/arXiv.2304.03123},
archivePrefix = {arXiv},
       eprint = {2304.03123},
 primaryClass = {math.DS},
}

@article{carvalho2019positively,
author = {Carvalho, Bernardo and Cordeiro, Welington},
year = {2019},
month = {},
pages = {1005-1016},
title = {Positively N-expansive Homeomorphisms and the L-shadowing Property},
volume = {31},
journal = {Journal of Dynamics and Differential Equations},
doi = {10.1007/s10884-018-9698-3}
}

@article{CARVALHO20163734,
title = {N-expansive homeomorphisms with the shadowing property},
journal = {Journal of Differential Equations},
volume = {261},
number = {6},
pages = {3734-3755},
year = {2016},
issn = {0022-0396},
doi = {https://doi.org/10.1016/j.jde.2016.06.003},
url = {https://www.sciencedirect.com/science/article/pii/S0022039616301395},
author = {B. Carvalho and W. Cordeiro},
}

@article{Artigue2019CountablyAE,
author = {Carvalho, Bernardo and Cordeiro, Welington and Artigue, Alfonso and Vieitez, Jose},
year = {2022},
month = {},
pages = {3369 -3378},
title = {Countably and entropy expansive homeomorphisms with the shadowing property},
volume = {150},
journal = {Proceedings of the American Mathematical Society},
doi = {10.1090/proc/15326}
}

@article{ARTIGUE20203057,
title = {Beyond topological hyperbolicity: The L-shadowing property},
journal = {Journal of Differential Equations},
volume = {268},
number = {6},
pages = {3057-3080},
year = {2020},
issn = {0022-0396},
doi = {https://doi.org/10.1016/j.jde.2019.09.052},
url = {https://www.sciencedirect.com/science/article/pii/S0022039619304590},
author = {Alfonso Artigue and Bernardo Carvalho and Welington Cordeiro and José Vieitez}
}

@article{katoconcerning,
title = {Concerning continuum-wise fully expansive homeomorphisms of continua},
journal = {Topology and its Applications},
volume = {53},
number = {3},
pages = {239-258},
year = {1993},
issn = {0166-8641},
doi = {https://doi.org/10.1016/0166-8641(93)90119-X},
url = {https://www.sciencedirect.com/science/article/pii/016686419390119X},
author = {Hisao Kato}
}

@book{hall1955elementary,
  title={Elementary Topology},
  author={Hall, D.W. and Spencer, G.L.},
  lccn={lc55010237},
  url={https://books.google.com.br/books?id=IANRAAAAMAAJ},
  year={1955},
  publisher={Wiley}
}

\vspace{1.5cm}
\noindent

{\em R. Arruda}
\vspace{0.2cm}

\noindent

Departamento de Matem\'atica,

Universidade Federal de Minas Gerais - UFMG

Av. Ant\^onio Carlos, 6627 - Campus Pampulha

Belo Horizonte - MG, Brazil.
\vspace{1.0cm}

{\em B. Carvalho}
\vspace{0.2cm}

\noindent

Dipartimento di Matematica,

Università degli Studi di Roma Tor Vergata

Via Cracovia n.50 - 00133

Roma - RM, Italy

\email{mldbnr01@uniroma2.it}

\vspace{1.0cm}

{\em A. Sarmiento}
\vspace{0.2cm}

\noindent

Departamento de Matem\'atica,

Universidade Federal de Minas Gerais - UFMG

Av. Ant\^onio Carlos, 6627 - Campus Pampulha

Belo Horizonte - MG, Brazil.
\vspace{0.2cm}

\end{document}